\documentclass[a4paper]{article}
\usepackage{amsmath,amsfonts,amssymb,amsthm,amstext}
\usepackage{mathtools}
\usepackage{latexsym}
\usepackage{microtype}
\usepackage{lmodern}
\usepackage[T1]{fontenc}
\usepackage[utf8]{inputenc}
\usepackage[english]{babel}
\xdef\articletitle{On Stability and Convergence of a Three-layer Semi-discrete Scheme for an Abstract Analogue of the Ball Integro-differential Equation}
\xdef\Rogava{Jemal Rogava}
\xdef\Tsiklauri{Mikheil Tsiklauri}
\xdef\Vashakidze{Zurab Vashakidze}
\usepackage[pdftex,
 pdfauthor={ {\Rogava}, {\Tsiklauri} and {\Vashakidze} },
 pdftitle={\articletitle},
 pdfsubject={Mathematics, Applied Mathematics},
 pdfstartview={FitH}]{hyperref}
\usepackage{tikz}
\usepackage{xcolor}
\usepackage{enumitem}
\usepackage{graphicx}
\usepackage{float}
\usepackage{color}
\usepackage{subcaption}
\usepackage{pgf,pgfplots,pgfplotstable}
\usepackage{array}
\usepackage{booktabs}
\usepackage[nottoc]{tocbibind}
\usepackage[square,sort,comma,numbers,sectionbib]{natbib}
\graphicspath{ {figures/} }
\usepackage{geometry}
\geometry{
	total={135mm,210mm},
}
\pgfplotsset{compat=newest}


\numberwithin{equation}{section}

\theoremstyle{plain}
\newtheorem{theorem}{Theorem}[section]

\newtheorem{lemma}{Lemma}[section]
\newtheorem{remark}{Remark}[section]

\newtheorem*{definition*}{Definition}

\makeatletter
\renewenvironment{proof}[1][\proofname]{%
	\par\pushQED{\qed}\normalfont%
	\topsep6\p@\@plus6\p@\relax
	\trivlist\item[\hskip\labelsep\bfseries#1\@addpunct{.}]%
	\ignorespaces
}{%
	\popQED\endtrivlist\@endpefalse
}
\makeatother


\providecommand{\keywords}[1]
{
	\small	
	\noindent\textbf{\textit{Keywords and phrases:}} #1
}

\providecommand{\msc}[1]
{
	\small	
	\noindent\textbf{\textit{MSC 2010:}} #1
}

\definecolor{lime}{HTML}{A6CE39}
\DeclareRobustCommand{\orcidicon}{
\begin{tikzpicture}
	\draw[lime, fill=lime] (0,0) 
	circle [radius=0.16] 
	node[white] {{\fontfamily{qag}\selectfont \tiny ID}};
	\draw[white, fill=white] (-0.0625,0.095) 
	circle [radius=0.007];
\end{tikzpicture}
\hspace{-2mm}
}
\foreach \x in {A, ..., Z}{%
	\expandafter\xdef\csname orcid\x\endcsname{\noexpand\href{https://orcid.org/\csname orcidauthor\x\endcsname}{\noexpand\orcidicon}}
}



\title{\articletitle}
\author{ {\Rogava}\orcidA{}, {\Tsiklauri}\orcidB{} and {\Vashakidze}\orcidC{} }

\date{}

\begin{document}

\maketitle

\begin{abstract}\label{abstract}
	We consider the Cauchy problem for a second-order nonlinear evolution equation in a Hilbert space. This equation represents the abstract generalization of the Ball integro-differential equation. The general nonlinear case with respect to terms of the equation which include a square of a norm of a gradient is considered. A three-layer semi-discrete scheme is proposed in order to find an approximate solution. In this scheme, the approximation of nonlinear terms that are dependent on the gradient is carried out by using an integral mean. We show that the solution of the nonlinear discrete problem and its corresponding difference analogue of a first-order derivative is uniformly bounded. For the solution of the corresponding linear discrete problem, it is obtained high-order \textit{a priori} estimates by using two-variable Chebyshev polynomials. Based on these estimates we prove the stability of the nonlinear discrete problem. For smooth solutions, we provide error estimates for the approximate solution. An iteration method is applied in order to find an approximate solution for each temporal step. The convergence of the iteration process is proved.
\end{abstract}
\keywords{Cauchy problem, Three--layer semi--discrete scheme, Nonlinear integro-differential equation, Stability and convergence, Abstract analogue of beam equation, Chebyshev polynomials.} \\
\msc{46N40, 65J08, 65M06, 65M12, 65M22, 74H15, 74K10.}

\section*{Introduction}
In the present work, we consider the Cauchy problem in the Hilbert space for a nonlinear second-order abstract differential equation. Coefficients in the main part of the equation are self-adjoint positively defined, in general, unbounded operators. Our goal is to find an approximate solution to this problem. To do so, we apply a three-layer symmetrical semi-discrete scheme. In this scheme, nonlinear terms are approximated by using integral mean.

The considered equation represents an abstract generalization of J. M. Ball beam equation (see \cite{BL}). J. M. Ball has generalized Kirchhoff type nonlinear equation for beam, that was obtained by S. Woinowsky-Krieger (see \cite{S0}), by introducing damping terms, in order to account for the effect of external and internal damping.

Investigation of the topics related to the classic Kirchhoff equation started with Bernstein's well-known paper (see \cite{Ber}). In this paper existence and uniqueness issues for local as well as global solutions of initial-boundary value problem for the Kirchhoff string equation is studied. The issues of solvability of the classical and generalized Kirchhoff equations were later considered by many authors: A. Arosio, S. Panizzi \cite{Ar}, L. Berselli, R. Manfrin \cite{BM}, P. D'Ancona, S. Spagnolo \cite{DanSp2}, \cite{DanSp}, R. Manfrin \cite{Man}, L. A. Medeiros \cite{Med}, M. Matos \cite{Mat}, K. Nishihara \cite{Nish}, S. Panizzi \cite{Pan}. In the works \cite{Ar}, \cite{DanSp2}, \cite{DanSp}, \cite{Man} and \cite{Nish} issues of well-posedness and global solvability are thoroughly studied for a generalized Kirchhoff equation. In \cite{Pan} the existence of a global solution with low regularity is studied for Kirchhoff-type equations. An abstract analogue of the Kirchhoff-type beam equation is considered in the work by L. A. Medeiros \cite{Med}, where the existence and uniqueness theorem for the regular solution of the Cauchy problem is proved. The same abstract nonlinear equation, strengthened by the first derivative with respect to time, is discussed
in the work by P. Biler and E. H. de Brito (see \cite{Bi}, \cite{Br}), where most attention is paid to study of the behaviour of Cauchy problem. We should note that participation of the square of the main operator in the linear
part of this equation essentially helps to obtain the necessary \textit{a priori} estimates.

The following works are dedicated to approximate solutions of initial-boundary value problems for classical and generalized Kirchhoff equations: A. I. Christie, J. Sanz-Serna \cite{SS}, T. Geveci, I. Christie \cite{GCh}, I.-S. Liu, M.A. Rincon \cite{LR}, J. Peradze \cite{Per}, J. Rogava, M. Tsiklauri \cite{RTs}, \cite{RTs2} and in \cite{vashakidze2020application}. An algorithm of approximate solution for the dynamic beam equation is studied in \cite{GCh}. This algorithm represents a combination of the Galerkin method for spatial coordinates and the finite difference method for the time coordinate. The same combination of the methods is investigated for the classic Kirchhoff equation in  \cite{Per}. Design of algorithms for finding numerical solutions and their investigations for initial-boundary value problems of some classes integro-differential equations are considered in the book of T. Jangveladze, Z. Kiguradze and B. Neta \cite{JANGVELADZE201669}.

As it was mentioned before J. M. Ball - generalized the Kirchhoff beam equation by introducing damping terms, to account for the effect of external and internal damping. For an approximate solution of the initial-boundary problem of this equation, S. M. Choo and S. K. Chung proposed the finite difference method (see \cite{CC}). In this work stability and convergence of the approximate solution is investigated.

As far as we know, issues of approximate solution of abstract analogue of Kirchhoff-type equation for a beam are less studied. In the present paper, investigations of stability and convergence of the designed semi-discrete scheme for second-order (complete kind) nonlinear operator differential equation that represents the abstract analogue of a model of J. M. Ball for the beam is based on two facts: (a) $\left (u_{k} -u_{k -1}\right )/\tau $ and $B^{1/2} u_{k}$ are uniformly bounded ($u_{k}$ is an approximate solution, and $\tau $ is time step; linear operator $B$ is included in the main part of the equation); (b) For the solution of the corresponding linear problem an \textit{a priori} estimation is obtained where on the left-hand side power $s$ and on the right-hand side power $s -1$ of the operator $B$ is included. These facts give the possibility to weaken the nonlinear terms in the given nonlinear equation so much that, to make it possible to apply Gr\"{o}nwall's lemma. Besides, it is not required to impose any essential restriction for the temporal step $\tau$.

\section{Statement of the problem and semi-discrete scheme}\label{sec:section1}
Let us consider the following Cauchy problem in Hilbert space $H$:
\begin{align}
	&\frac{d^{2} u}{d t^{2}} +a_{1} B \frac{d u}{d t} +a_{2} B u +\psi _{1} \left (\left \Vert A^{1/2} u\right \Vert ^{2}\right ) A u \nonumber \\
	&+\frac{d}{d t} \left (\psi _{2} \left (\left \Vert A^{1/2} u\right \Vert ^{2}\right )\right ) A u +\psi _{3} \left (\left \Vert u\right \Vert ^{2}\right ) u \nonumber \\
	&+C u +N \frac{d u}{d t} +M \left (u\right ) = f (t) ,\text{\quad }t \in ]0 ,\;\overline{t}] , \label{E1.1} \\
	&u \left (0\right ) = \varphi _{0} ,\text{\quad }u^{ \prime } \left (t\right )\vert _{t =0} =\varphi _{1}\;  \label{E1.2}
\end{align}
where $A$ and $B$ are self-adjoint, positively defined (generally unbounded) operators with the domains $D \left (A\right )$ and $D \left (B\right )$ which are everywhere dense in $H$, besides, the following conditions are fulfilled
\begin{equation}\left \Vert A u\right \Vert ^{2} \leq b_{0}^{2} \left (B u ,u\right ) ,\text{\quad } \forall u \in D (B) \subset D \left (A\right ) ,\text{\quad }b_{0} =c o n s t >0 , \label{E1.2.1}
\end{equation}
where by $\Vert  \cdot \Vert $ and $\left ( \cdot  , \cdot \right )$ are defined correspondingly the norm and scalar product in $H$; $\psi _{1} (s)$, $\;\psi _{2} (s)$ and $\psi _{3} (s)$,$\;s \in [0 , +\infty [$ are twice continuously differentiable nonnegative functions, besides $\psi _{2} (s)  $ is increasing function; $C$ is linear operator, which satisfies the following condition
\begin{equation}\left \Vert C u\right \Vert  \leq a_{0} \left \Vert A u\right \Vert \; ,\text{\quad } \forall u \in D (A) \subset D (C) ,\text{\quad }a_{0} =c o n s t >0 ;\text{\quad } \label{E1.3}
\end{equation}
$N$ is linear bounded operator; nonlinear operator $M ( \cdot )$ satisfies Lipschitz condition; $a_{1}$ and $a_{2}$ are positive constants $\varphi _{0}$ and $\varphi _{1}$ are given vectors from $H$; $u \left (t\right )$ is a twice continuously differentiable, unknown function with values in $H$ and $f \left (t\right )$ is given continuous function with values in $H$.

As in the linear case (see S. G. Krein \cite{Kr}) vector function $u \left (t\right )$ with values in $H$, defined on the interval $\left [0 ,\overline{t}\right ]$ is called a solution of the problem (\ref{E1.1})-(\ref{E1.2}) if it satisfies the following conditions: (a) $u \left (t\right )$ is twice continuously differentiable in the interval $\left [0 ,\overline{t}\right ]$; (b) $u \left (t\right ) ,u^{ \prime } \left (t\right ) \in D \left (B\right )$ for any $t$ from $\left [0 ,\overline{t}\right ]$ and $\;B u \left (t\right )$ and $B u^{ \prime } \left (t\right )$ are continuous functions; (c) $u \left (t\right )$ satisfies equation (\ref{E1.1}) on the $\left [0 ,\overline{t}\right ]$ interval and the initial condition (\ref{E1.2}). Here continuity and differentiability is meant by metric $H$.

Equation (\ref{E1.1}) is an abstract analogue of the following equation
\begin{align}\label{B1}
	&\frac{ \partial ^{2}u}{ \partial t^{2}} +a_{1} \frac{ \partial ^{4}}{ \partial x^{4}}\genfrac{(}{)}{}{}{ \partial u}{ \partial t} +a_{2} \frac{ \partial ^{4}u}{ \partial x^{4}} -\left (\alpha  +\beta  \int \limits _{0}^{l}\left [ \partial _{\xi }u \left (\xi  ,t\right )\right ]^{2} d \xi \right ) \frac{ \partial ^{2}u}{ \partial x^{2}} \nonumber\\
	&-\gamma  \frac{ \partial }{ \partial t}\left (\int \limits _{0}^{l}\left [ \partial _{\xi }u \left (\xi  ,t\right )\right ]^{2} d \xi \right ) \frac{ \partial ^{2}u}{ \partial x^{2}} +\delta  \frac{ \partial u}{ \partial t} =f (t) ,\text{\quad }(x ,t) \in ]0 ,l [ \times ] 0 ,\overline{t}]\,,
\end{align}
where $a_{1}$, $a_{2}$, $\beta$ and $\gamma $ are positive and $\alpha $ , $\delta $ any constants.

At the first time, equation (\ref{B1}) was considered by J. M. Ball in \cite{BL}. In this paper, J. M. Ball investigated the existence, uniqueness and asymptotic behaviour of the solution of equation (\ref{B1}) using the topological method.

We look for an approximate solution of the problem (\ref{E1.1})-(\ref{E1.2}) using the following semi-discrete scheme
\begin{align*} &  & \frac{u_{k +1} -2 u_{k} +u_{k -1}}{\tau ^{2}} +a_{1} B \frac{u_{k +1} -u_{k -1}}{2 \tau } +a_{2} B \frac{u_{k +1} +u_{k -1}}{2} \\
	&  &  +a_{1 ,k} A \frac{u_{k +1} +u_{k -1}}{2} +d_{k} A \frac{u_{k +1} +u_{k -1}}{2} +a_{3 ,k} \frac{u_{k +1} +u_{k -1}}{2}
\end{align*}
\begin{equation} +C u_{k} +N \frac{u_{k +1} -u_{k -1}}{2 \tau } +M \left (u_{k}\right ) =f_{k} , \label{eq1}
\end{equation}
where $f_{k} =f \left (t_{k}\right )$, $~k =1 ,\ldots  ,n -1$,$~t_{k} =k \tau $,$~$ $\tau  =\overline{t}/n~$ $\left (n >1\right )$,
\begin{align*}
	&a_{1 ,k} = \widetilde{\psi }_{1} \left (\gamma _{k -1} ,\gamma _{k +1}\right ) ,\text{\quad }\gamma _{k} =\left \Vert A^{1/2} u_{k}\right \Vert ^{2}\text{,}\\
	&d_{k}  = \frac{\psi _{2} (\gamma _{k +1}) -\psi _{2} (\gamma _{k -1})}{2 \tau }\;\text{,}\quad a_{3 ,k} =  \widetilde{\psi }_{3} \left (\left \Vert u_{k -1}\right \Vert ^{2} ,\left \Vert u_{k +1}\right \Vert ^{2}\right )\,,
\end{align*}
and where function $\widetilde{\psi }_{1} (a ,b) $ (analogously of $\widetilde{\psi }_{3} (a ,b)$) is defined using the following formula
\begin{equation}\widetilde{\psi }_{1} (a ,b) =\frac{1}{b -a} \int \limits _{a}^{b}\psi _{1} (s) d s .\text{\quad } \label{E1.5}
\end{equation}

It is clear that if interval $b -a$ is small enough, then (\ref{E1.5}) formula gives good approximation of $\psi _{1} (s) $ function at $s =(a +b)/2$.

In (\ref{eq1}), nonlinear terms are approximated using integral mean. This approach first was used in \cite{Per} and \cite{RTs2}.

As an approximate solution $u \left (t\right )$ of problem (\ref{E1.1})-(\ref{E1.2}) at point $t_{k} =k \tau $ we declare $u_{k}$, $u \left (t_{k}\right ) \approx u_{k}\text{.}$
\begin{remark}\label{remark1.1}
	From (\ref{E1.2.1}) condition it follows that
	\begin{equation}\left \Vert A u\right \Vert  \leq b_{0} \left \Vert B^{1/2} u\right \Vert  ,\text{\quad } \forall u \in D (B) \subset D \left (A\right ) . \label{E1.5.1}
	\end{equation}
\end{remark}
It is known, that $D (B)$ is a core of $B^{1/2 }$ (see \cite{TK}, p. 354). It means that, for every $u \in D (B^{1/2}) $ there exists sequence $u_{n} \in D (B)$ such that,  $u_{n} \rightarrow u$ and $B^{1/2} u_{n} \rightarrow B^{1/2} u$. From here, according to (\ref{E1.5.1}) it follows that $A u_{n}$ is Cauchy sequence and it is clear that, since $H$ is complete, this sequence is convergent. $u \in D (A)$ and $A u_{n} \rightarrow A u$ as $A$ is closed operator. From here and (\ref{E1.5.1}) it follows that:
\begin{equation}\left \Vert A u\right \Vert  \leq b_{0} \left \Vert B^{1/2} u\right \Vert  ,\text{\quad } \forall u \in D (B^{1/2}) \subset D \left (A\right ) . \label{E1.6}
\end{equation}

\section{Uniform boundedness of solution of discrete problem and difference analogue of the first-order derivative}\label{sec:section2}
The following theorem takes place (below everywhere $c$ denotes positive constant).
\begin{theorem}\label{theorem2.1}
	For discrete problem (\ref{eq1}) the vectors $(u_{k} -u_{k -1})/\tau $ and $B^{1/2} u_{k}$ are uniformly bounded, i.e. there exist constants $c_{1}$ and $c_{2}$ (independent of $n$) such that
	\begin{equation*}\genfrac{\Vert }{\Vert }{}{}{u_{k} -u_{k -1}}{\tau } \leq c_{1}\; ,\text{\quad }\left \Vert B^{1/2} u_{k}\right \Vert  \leq c_{2}\; ,\text{\quad }k =1 ,\ldots  ,n\;\text{.}
	\end{equation*}
\end{theorem}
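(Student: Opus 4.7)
The plan is to use the standard discrete energy method: take the inner product of (\ref{eq1}) with $(u_{k+1} - u_{k-1})/(2\tau)$ and exploit the symmetric three-layer structure, which is arranged precisely so that each principal term telescopes in $k$.

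First, I would record the elementary algebraic identities
\begin{align*}
\left(\frac{u_{k+1} - 2u_k + u_{k-1}}{\tau^2},\,\frac{u_{k+1} - u_{k-1}}{2\tau}\right) &= \frac{1}{2\tau}\left(\left\|\frac{u_{k+1}-u_k}{\tau}\right\|^2 - \left\|\frac{u_k-u_{k-1}}{\tau}\right\|^2\right),\\
\left(a_2 B\,\frac{u_{k+1}+u_{k-1}}{2},\,\frac{u_{k+1}-u_{k-1}}{2\tau}\right) &= \frac{a_2}{4\tau}\left(\|B^{1/2}u_{k+1}\|^2 - \|B^{1/2}u_{k-1}\|^2\right),\\
\left(a_1 B\,\frac{u_{k+1}-u_{k-1}}{2\tau},\,\frac{u_{k+1}-u_{k-1}}{2\tau}\right) &\geq 0,
\end{align*}
together with the crucial nonlinear identity: setting $\Psi_j(s):=\int_0^s\psi_j(\sigma)\,d\sigma\ge 0$, the integral-mean definition (\ref{E1.5}) rewrites $a_{1,k}(\gamma_{k+1}-\gamma_{k-1}) = \Psi_1(\gamma_{k+1}) - \Psi_1(\gamma_{k-1})$, whence $(a_{1,k} A(u_{k+1}+u_{k-1})/2,(u_{k+1}-u_{k-1})/(2\tau)) = [\Psi_1(\gamma_{k+1})-\Psi_1(\gamma_{k-1})]/(4\tau)$; the $a_{3,k}$ term telescopes in the same way. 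The $d_k$ term is nonnegative, equal to
$$\frac{1}{8\tau^{2}}\bigl(\psi_2(\gamma_{k+1}) - \psi_2(\gamma_{k-1})\bigr)\bigl(\gamma_{k+1} - \gamma_{k-1}\bigr) \ge 0,$$
by monotonicity of $\psi_2$.

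Multiplying through by $2\tau$ and summing over $k=1,\dots,m-1$, all the positive-sign contributions telescope. Retaining only the principal combination
$$E_m := \left\|\frac{u_m-u_{m-1}}{\tau}\right\|^2 + \frac{a_2}{2}\bigl(\|B^{1/2}u_m\|^2 + \|B^{1/2}u_{m-1}\|^2\bigr)$$
on the LHS (and dropping the nonnegative damping and $\Psi_1,\Psi_3$ contributions), the remaining inner products with $Cu_k$, $N(u_{k+1}-u_{k-1})/(2\tau)$, $M(u_k)$, and $f_k$ are controlled by Cauchy--Schwarz together with (\ref{E1.3}), (\ref{E1.6}), the Lipschitz property of $M$, the boundedness of $N$, and the inverse-spectrum bound $\|u\|\le\beta^{-1}\|B^{1/2}u\|$ coming from positive-definiteness of $B$. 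Each such term contributes at most $c\tau\sum_{j\le k+1}(E_j+1)$, yielding the discrete Gronwall inequality
$$E_m \le E_1 + c\tau\sum_{j=0}^{m}E_j + c,\qquad 2 \le m \le n.$$
Provided $u_1$ is chosen so that $E_1$ is bounded independently of $n$ (for instance $u_1 = \varphi_0 + \tau\varphi_1 + O(\tau^2)$ in $D(B^{1/2})$), the discrete Gronwall lemma gives $E_m\le c$ uniformly in $m$, which is exactly the claimed bound on both $\|(u_k-u_{k-1})/\tau\|$ and $\|B^{1/2}u_k\|$.

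The main obstacle will be the careful bookkeeping needed so that the Gronwall structure appears cleanly. In particular, the term $M(u_k)$ provides only an estimate in $\|u_k\|$, and closing the estimate requires positive-definiteness of $B$ to bound $\|u_k\|$ by $\|B^{1/2}u_k\|$, which is itself part of $E_k$; likewise, the $C u_k$ term must be absorbed via (\ref{E1.3}) and (\ref{E1.6}) rather than naively, since only $B^{1/2}u_k$--and not $Au_k$--is available in the energy.
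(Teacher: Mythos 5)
Your proposal is correct and follows essentially the same route as the paper: pairing the equation with the symmetric difference $u_{k+1}-u_{k-1}$ (you use the scaled version $(u_{k+1}-u_{k-1})/(2\tau)$), telescoping the principal and integral-mean nonlinear terms, discarding the nonnegative $a_1$- and $d_k$-contributions via monotonicity of $\psi_2$, and closing with a discrete Gr\"onwall argument. The only (immaterial) difference is that the paper passes to square roots $\delta_k=\sqrt{\lambda_k}$ and runs Gr\"onwall linearly in the norms under the explicit condition $1-\tau\Vert N\Vert>0$, whereas you run Gr\"onwall on the squared energies via Young's inequality with the analogous implicit smallness condition $c\tau<1$ needed to absorb the top term.
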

\begin{proof}
	If we multiply both sides of the equality (\ref{eq1}) on vector $u_{k +1} -u_{k -1} =\left (u_{k +1} -u_{k}\right ) +\left (u_{k} -u_{k -1}\right )$, we obtain
	\begin{align}\label{E2.1}
		&\alpha_{k+1}-\alpha_{k}+2{a}_{1}{\tau}{\left\Vert {B}^{1/2}{\delta}{u}_{k} \right\Vert}^{2}+\frac{1}{2}{\beta}_{k+1}-\frac{1}{2}{\beta}_{k-1}+\frac{1}{2}{a}_{1,k}\left( {\gamma}_{k+1}-{\gamma}_{k-1} \right) \nonumber \\
		&+\frac{1}{2}{d}_{k}\left( {\gamma}_{k+1}-{\gamma}_{k-1} \right)+\frac{1}{2}{a}_{3,k}\left( {\vartheta}_{k+1}-{\vartheta}_{k-1} \right)=\left( {g}_{k},{u}_{k +1}-{u}_{k -1} \right)\,,
	\end{align}
	where
	\begin{align*}
		&\alpha _{k}  = \genfrac{\Vert }{\Vert }{}{}{u_{k} -u_{k -1}}{\tau }^{2} ,\text{\quad }\beta _{k} =\left \Vert B^{1/2} u_{k}\right \Vert ^{2}\text{,} \quad\gamma _{k}  = \left \Vert A^{1/2} u_{k}\right \Vert ^{2} ,\text{\quad }\vartheta _{k} =\left \Vert u_{k}\right \Vert ^{2},\\
		&g_{k} =f_{k} -C u_{k} -N \delta{u}_{k} -M \left (u_{k}\right )\,,\quad\delta{u}_{k}=\frac{u_{k +1} -u_{k -1}}{2 \tau }\,.
	\end{align*}
	According to (\ref{E1.5}) we get:
	\begin{align*}
		a_{1 ,k} \left (\gamma _{k +1} -\gamma _{k -1}\right ) &=\widetilde{\psi }_{1} \left (\gamma _{k -1} ,\gamma _{k +1}\right )\left (\gamma _{k +1} -\gamma _{k -1}\right ) \\
		&=\int \limits _{0}^{\gamma _{k +1}}\psi _{1} (s) d s -\int \limits _{0}^{\gamma _{k -1}}\psi _{1} (s) d s\,,
	\end{align*}
	\begin{equation*}a_{3 ,k} (\vartheta _{k +1} -\vartheta _{k -1}) =\int \limits _{0}^{\vartheta _{k +1}}\psi _{3} (s) d s -\int \limits _{0}^{\vartheta _{k -1}}\psi _{3} (s) d s\,.
	\end{equation*}
	Besides according to the monotonicity of function $\psi _{2} (s)$, the following estimation is valid
	\begin{equation*}d_{k} \left (\gamma _{k +1} -\gamma _{k -1}\right ) =\frac{1}{2 \tau } \left (\psi _{2} (\gamma _{k +1}) -\psi _{2} (\gamma _{k -1})\right ) \left (\gamma _{k +1} -\gamma _{k -1}\right ) \geq 0\,.
	\end{equation*}
	Then from (\ref{E2.1}) we get
	\begin{align}
		{\lambda}_{k+1} \leq  {\lambda}_{k} +\left \vert (g_{k} ,u_{k +1} -u_{k -1})\right \vert  , \label{E2.2}
	\end{align}
	where $\lambda _{k}=\alpha _{k} +\frac{1}{2} \left (\beta _{k} +\beta _{k -1} +\mu _{k} +\mu _{k -1} +\nu _{k} +\nu _{k -1}\right )$,
	\begin{equation*}\mu _{k} =\int \limits _{0}^{\gamma _{k}}\psi _{1} (s) d s ,\text{\quad }\nu _{k} =\int \limits _{0}^{\vartheta _{k}}\psi _{3} (s) d s\text{.}
	\end{equation*}
	If we use Schwarz inequality, condition (\ref{E1.3}) and Remark \ref{remark1.1} we obtain
	\begin{align*}
		&\left \vert (g_{k} ,u_{k +1} -u_{k -1})\right \vert  \\
		&\leq  \left \Vert u_{k +1} -u_{k -1}\right \Vert \left (\left \Vert f_{k}\right \Vert  +a_{0} b_{0} \sqrt{\beta_{k}} +\left \Vert M \left (u_{k}\right )\right \Vert  +\left \Vert N \delta{u}_{k}\right \Vert \right )\text{.}
	\end{align*}
	From here follows
	\begin{align}
		\left \vert (g_{k} ,u_{k +1} -u_{k -1})\right \vert  &\leq  \tau  \left (\sqrt{\alpha _{k +1}} +\sqrt{\alpha _{k}}\right )\left (\left \Vert f_{k}\right \Vert \, +\left \Vert M \left (u_{k}\right )\right \Vert \right . \nonumber  \\
		&\left . +c_{1} \sqrt{\beta _{k}} +c_{0} \left (\sqrt{\alpha _{k +1}} +\sqrt{\alpha _{k}}\right )\right ) , \label{E2.3}
	\end{align}
	where $c_{0} =\frac{1}{2} \left \Vert N\right \Vert $, $c_{1} =a_{0} b_{0}$ .
	
	For nonlinear operator $M \left( \cdot \right)$ due to Lipschitz condition we have
	\begin{align}
		\left \Vert M \left (u_{k}\right )\right \Vert  &\leq c \left (\left \Vert u_{k} -u_{k -1}\right \Vert  +\left \Vert u_{k -1} -u_{k -2}\right \Vert  +\ldots  +\left \Vert u_{1} -u_{0}\right \Vert \right ) +\left \Vert M \left (u_{0}\right )\right \Vert  \nonumber \\
		&= c \tau  \left (\sqrt{\alpha _{k}} +\sqrt{\alpha _{k -1}} +\ldots  +\sqrt{\alpha _{1}}\right ) +\left \Vert M \left (u_{0}\right )\right \Vert\,. \label{E2.4}
	\end{align}
	If we insert inequality (\ref{E2.4}) into (\ref{E2.3}) we get
	\begin{align}\label{E2.5}
		\left \vert (g_{k} ,u_{k +1} -u_{k -1})\right \vert \leq \tau  \left (\sqrt{\alpha _{k +1}} +\sqrt{\alpha _{k}}\right ){\sigma}_{k}\,,
	\end{align}
	where
	\begin{equation*}\sigma _{k} = c \tau  \sum \limits _{i =1}^{k}\sqrt{\alpha _{i}} +c_{1} \sqrt{\beta _{k}} +c_{0} \left (\sqrt{\alpha _{k +1}} +\sqrt{\alpha _{k}}\right ) +\left \Vert f_{k}\right \Vert \, +\left \Vert M \left (u_{0}\right )\right \Vert\,.
	\end{equation*}
	From (\ref{E2.2}) according to (\ref{E2.5}) it follows
	\begin{equation}
		\lambda _{k +1} \leq \lambda _{k} +\varepsilon _{k} , \label{eq2}
	\end{equation}
	where $\varepsilon _{k}  = \tau  \left (\sqrt{\alpha _{k +1}} +\sqrt{\alpha _{k}}\right ) \sigma _{k}$.
	
	Obviously from (\ref{eq2}) we obtain
	\begin{align*}
		\lambda _{k +1} \leq \lambda _{1} +\tau  \sum \limits _{i =1}^{k}\left (\sqrt{\alpha _{i}} +\sqrt{\alpha _{i +1}}\right ) \sigma _{i}\text{,}
	\end{align*}
	from here we have
	\begin{equation*}
		\delta _{k +1}^{2} \leq \delta _{1}^{2} +\tau  \sum \limits _{i =1}^{k}\left (\delta _{i} +\delta _{i +1}\right ) \sigma _{i} ,\text{\quad \quad }\delta _{k} =\sqrt{\lambda _{k}}\,\text{.}
	\end{equation*}
	from here follows the following inequality
	\begin{equation}
		\delta _{k +1} \leq \delta _{1} +2 \tau  \sum \limits _{i =1}^{k}\sigma _{i} \leq c \tau  \sum \limits _{i =1}^{k}\delta _{i} +2 c_{0} \tau  \delta _{k +1} +\eta _{k} , \label{E2.6}
	\end{equation}
	where
	\begin{equation*}
		\eta _{k} =\delta _{1} +c \left \Vert M \left (u_{0}\right )\right \Vert  +2 \tau  \sum \limits _{i =1}^{k}\left \Vert f_{i}\right \Vert \text{.}
	\end{equation*}
	If we assume that $1 -2 \tau  c_{0} =1 -\tau  \left \Vert N\right \Vert  >0$, then from (\ref{E2.6}) we have
	\begin{equation*}\delta _{k +1} \leq c \tau  \sum \limits _{i =1}^{k}\delta _{i} +c \eta _{k}\text{.}
	\end{equation*}
	From here, according to discrete analogue of Gr\"{o}nwall's lemma, we have
	\begin{equation*}
		\delta _{k +1} \leq c \exp  (c t_{k -1}) \left (\eta _{k} +\tau  \delta _{1}\right )\text{.}
	\end{equation*}
	From here it follows that $\alpha _{k}$ and $\beta _{k}$ are uniformly bounded.
\end{proof}
\begin{remark}\label{remark2.2}
	From uniform boundedness of vectors $B^{1/2} u_{k}$ follows uniform boundedness of  $A u_{k}$ vectors (see inequality (\ref{E1.6})). From this fact and the following inequality
	\begin{align}\label{eq:sqrt_A}
		\left \Vert A^{1/2} u\right \Vert  = \left \Vert A^{ -1/2} (A u)\right \Vert  \leq \frac{1}{\sqrt{m_{A}}} \left \Vert A u\right \Vert  ,\text{\quad } \forall u \in D (A)\,,
	\end{align}
	where $m_{A} >0$ is lower bound of operator $A$ ($(A u ,u) \geq m_{A} (u ,u)$), follows that $A^{1/2}{u}_{k}$ is uniformly bounded.
\end{remark}
\begin{remark}
	\label{remark2.3}
	According to the triangle inequality, from uniform boundedness of $(u_{k +1} -u_{k})/\tau $ vectors follows uniform boundenss of  $(u_{k +1} -u_{k -1})/2 \tau $ vectors.
\end{remark}
\begin{remark}
	\label{remark2.4}
	From uniform boundenss of $\left \Vert A u_{k}\right \Vert $and $\left \Vert (u_{k +1} -u_{k})/\tau \right \Vert$ follows uniform boundess of
	\begin{equation*}
		\left \vert (\gamma _{k} -\gamma _{k -1})/\tau \right \vert\,.
	\end{equation*}
\end{remark}

\section{Estimations for two-variable Chebyshev polynomials}\label{sec:section3}
To obtain \textit{a priori} estimations for the main linear part of the difference equation (\ref{eq1}) we require estimations for a specific class of polynomials that we call two-variable Chebyshev polynomials. These polynomials are defined using the following recurrence relation (see \cite{R2}):
\begin{align}
	U_{k +1} (x ,y) = x U_{k} (x ,y) -y U_{k -1} (x ,y) ,\text{\quad }k =1 ,2 ,\ldots \text{\quad } , \label{CH1} \\
	U_{1} (x ,y) = x ,\text{\quad }U_{0} (x ,y) \equiv 1. \nonumber
\end{align}
$U_{k} (x ,y)$ we call two-variable Chebyshev polynomials, as $U_{k} \left (2 x ,1\right )$ represents Chebyshev polynomials of the second kind (see, \textit{e.g.} \cite{Sege}).

From recurrence relation (\ref{CH1}) using induction, we get the following formula
\begin{equation}U_{k} (x ,y) =\sqrt{y^{k}} U_{k} (\xi  ,1) ,\text{\quad }\xi  =\frac{x}{\sqrt{y}} ,\text{\quad }y >0. \label{CH2}
\end{equation}
The formula (\ref{CH2}) is important as it relates $U_{k} (x ,y)$ polynomials with classic Chebyshev polynomials (we assume that in classic Chebyshev polynomials $x$ variable is replaced by $x/2$). Let us introduce the following domains:
\begin{align*}
	\Delta  = \left \{(x ,y) :\;\left \vert y\right \vert  <1\,\,\text{and}\,\,\left \vert x\right \vert  <y +1\right \}\text{.} \\
	\Omega ^{ +} = \left \{(x ,y) :\;4 y -x^{2} >0\right \} ,\text{\quad }\Omega ^{ -} =\left \{(x ,y) :\;4 y -x^{2} <0\right \}\text{,} \\
	\Delta ^{ +}  = \left \{(x ,y) \in  \Delta  :\;x \geq 0\right \} ,\text{\quad }\Omega _{1} =\Omega ^{ +} \cap  \Delta ^{ +} ,\text{\quad }\Omega _{2} =\Omega ^{ -} \cap  \Delta ^{ +}\text{.}
\end{align*}

It is well-known that roots of the classic Chebyshev polynomials are in $\;] -1 ,1[$ \  (see, \textit{e.g.}, \cite{Sege}). From here, according to formula (\ref{CH2}), it follows that, for any fixed positive $y$ roots of the polynomial $U_{k} (x ,y)$ are inside $] -2 \sqrt{y} ,2 \sqrt{y}[
$. Besides if we take into consideration, that $U_{k} ( \pm 2 ,1) =( -1)^{k} (k +1) $ and $\left \vert U_{k} (2 \xi  ,1)\right \vert $ reaches its maximum on boundary (see, \textit{e.g.}, \cite{Sege}), then from formula (\ref{CH2}) follows the following estimation
\begin{equation}\left \vert U_{k} (x ,y)\right \vert  \leq U_{k} (2 \sqrt{y} ,y) =(k +1) \sqrt{y^{k}} ,\text{\quad }(x ,y) \in \Omega ^{ +} . \label{CH2.1}
\end{equation}
From above discussion, we conclude that for any positive  $y$, $U_{k} (x ,y)$ is increasing function regarding $x$ variable, when $x \geq 2 \sqrt{y}$. Besides from recurrence relation (\ref{CH1}) it follows that, for any fixed $y \leq 0$, $U_{k} (x ,y)$ is increasing function regarding to $x$ variable, when $x \geq 0$. From here we obtain
\begin{equation}\left \vert U_{k} (x ,y)\right \vert  \leq U_{k} (1 +y ,y) =1 +y +\ldots  +y^{k} , \label{CH3}
\end{equation}
where $y \geq  -1$and $\left \vert x\right \vert  \leq 1 +y$.

From (\ref{CH3}) follows the following estimation
\begin{equation}
	\left \vert U_{k} (x ,y) (1 -y)\right \vert  \leq 1 ,\text{\quad }(x ,y) \in  \Delta  . \label{CH3.1}
\end{equation}
We also need estimation for $U_{k} (x ,y) -y^{m} U_{k -1} (x ,y)$, $m =0 ,1$, polynomials, where $(x ,y) \in  \Delta ^{ +}$.

The following inequality is simply obtained
\begin{equation}
	\left \vert U_{k} (x ,1) -U_{k -1} (x ,1)\right \vert  \leq \frac{2}{\sqrt{2 +x}} ,\text{\quad }x \in ] -2 ,2] . \label{CH5}
\end{equation}
According to formula (\ref{CH2}) and inequality (\ref{CH5}) the following estimation is valid
\begin{align}
	\left \vert U_{k} (x ,y) -\sqrt{y} U_{k -1} (x ,y)\right \vert = \sqrt{y^{k}} \left \vert U_{k} (\xi  ,1) -U_{k -1} (\xi  ,1)\right \vert  \leq \sqrt{2 y^{k}}\; , \label{CH6}
\end{align}
where $\xi  =x/\sqrt{y}$ , \ $(x ,y) \in \Omega _{1}$.

Let us estimate the following difference $U_{k} (x ,y) -y U_{k -1} (x ,y)$, when $(x ,y) \in \Omega _{1}$. According to inequalities (\ref{CH6}) and (\ref{CH2.1}) we have
\begin{align}
	\left \vert U_{k} (x ,y) -y U_{k -1} (x ,y)\right \vert \leq \sqrt{2} ,\text{\quad }(x ,y) \in \Omega _{1} . \label{CH7}
\end{align}

Let us estimate the following difference $U_{k} (x ,y) -y U_{k -1} (x ,y)$, when $(x ,y) \in \Omega _{2}$ and $y >0$. For that we require the following formulas:
\begin{equation}
	U_{k} (x ,y) =\sqrt{y^{k}} \sum _{i =0}^{k}C_{k +i +1}^{2 i +1} (\xi  -2)^{i} , \label{CH7.1}
\end{equation}
\begin{equation}
	U_{k} (x ,y) -\sqrt{y} U_{k -1} (x ,y) =\sqrt{y^{k}} \sum _{i =0}^{k}C_{k +i}^{2 i} (\xi  -2)^{i} , \label{CH7.2}
\end{equation}
where $\xi  =x/\sqrt{y}$ , $\;C_{k}^{i}$ are binomial coefficients ($C_{k}^{0} =1$).

Using simple transformation from (\ref{CH7.1}) we obtain formula (\ref{CH7.2}). The formula (\ref{CH7.1}) can be obtained by using Taylor expansion of $U_{k} (\xi  ,1) $ at $\xi  =2$. We need to consider that $U_{k}^{(i)} (2 ,1) =i ! C_{k +i +1}^{2 i +1}$.

From the following equality
\begin{align*}
	U_{k} (x ,y) -y U_{k -1} (x ,y) = \left (U_{k} (x ,y) -\sqrt{y} U_{k -1} (x ,y)\right ) +\left (1 -\sqrt{y}\right ) \sqrt{y} U_{k -1} (x ,y)\text{,}
\end{align*}
according to formulas (\ref{CH7.1}) and (\ref{CH7.2}), it follows that, for any fixed $y$ from $]0 ,1] $ interval $U_{k} (x ,y) -y U_{k -1} (x ,y)$ is increasing function regarding $x$, when $x \geq 2 \sqrt{y}$. From here follows
\begin{align}
	U_{k} \left (2 \sqrt{y} ,y\right ) -y U_{k -1} \left (2 \sqrt{y} ,y\right ) \leq  U_{k} -y U_{k -1} \leq U_{k} (1 +y ,y) -y U_{k -1} (1 +y ,y)\,, \label{CH8}
\end{align}
where $y >0$ and $(x ,y) \in \Omega _{2}$.

If we insert $U_{k} (2 \sqrt{y} ,y) =(k +1) \sqrt{y^{k} }$ and \eqref{CH3} in the relation (\ref{CH8}) then we obtain the following estimation
\begin{equation}\sqrt{y^{k}} \left ((k +1) \left (1 -\sqrt{y}\right ) +\sqrt{y}\right ) \leq U_{k} (x ,y) -y U_{k -1} (x ,y) \leq 1 , \label{CH9}
\end{equation}
where $y >0$ and $(x ,y) \in \Omega _{2}$.

We straightforwardly obtain the following inequality
\begin{equation}
	0 \leq U_{k} (x ,y) -y U_{k -1} (x ,y) \leq 1 , \label{CH10}
\end{equation}
where $y \leq 0$ and $(x ,y) \in \Omega _{2}$.

From estimations (\ref{CH7}), (\ref{CH9}) and (\ref{CH10}) we have
\begin{equation}
	\left \vert U_{k} (x ,y) -y U_{k -1} (x ,y)\right \vert  \leq \sqrt{2} ,\text{\quad }(x ,y) \in  \Delta ^{ +} . \label{CH11}
\end{equation}
Analogously to \eqref{CH11} we obtain
\begin{equation}
	\left \vert U_{k} (x ,y) -U_{k -1} (x ,y)\right \vert  \leq \sqrt{2} ,\text{\quad }(x ,y) \in  \Delta ^{ +} . \label{CH17}
\end{equation}

\section{High Order \textit{a priori} estimations for three-layer semi-discrete scheme corresponding to the second-order evolution equation}\label{sec:section4}
The three-layer scheme is natural for the second-order evolution equation and the two-layer scheme is natural for the first-order evolution equation. Investigation of a three-layer scheme is more difficult than an investigation of a two-layer scheme. This difficulty can be somehow simplified if we reduce the second-order evolution equation to the first-order evolution equation by introducing additional unknowns. In this case, a self-adjoint operator is replaced by an operator matrix that is not self-adjoint anymore. This makes it complicated to investigate the corresponding discrete problem.

Obtaining such \textit{a priori} estimations from where follows stability and convergence of the nonlinear semi-discrete scheme (\ref{eq1}), is based on high order accuracy \textit{a priori} estimations for corresponding linearized semi-discrete scheme. In this section, we obtain \textit{a priori} estimation for a three-layer semi-discrete scheme for the second-order evolution equation. In this estimation on the left-hand side, we have positive $s$ power for the main operator, while on the right-hand side we have $(s -1) $. This allows us to make weaken nonlinear part of the equation in such a way that, using the results obtained in the previous section we are able to use Gr\"{o}nwall's lemma and obtain the final estimation. To obtain these estimations we require to construct the exact representation of three-layer recurrence relations with operator coefficients by using two-variable Chebyshev polynomials. These kinds of estimates were obtained before by one of the authors of the presented papers (see \cite{R2}, \cite{RJ}).

Important results for constructing and investigating approximate schemes for the Cauchy problem for second-order evolution equations were obtained by the following authors: G. A. Baker \cite{Bak1}, G. A. Baker, J. H. Bramble \cite{Bak2}, G. A. Baker, V. A. Dougalis, S. M. Serbin \cite{Bak3}, L. A. Bales \cite{Bal}, J. Ka\v{c}ur \cite{Kac}, O. Ladyzhenskaya \cite{Lad}, M. Pultar \cite{Pul}, P. E. Sobolevskij, L. M. Chebotarova \cite{Sob}.

Let us consider in Hilbert space $H$ the following linear difference  equation
\begin{equation}
	\frac{u_{k +1} -2 u_{k} +u_{k -1}}{\tau ^{2}} +a_{1} B \frac{u_{k +1} -u_{k -1}}{2 \tau } +a_{2} B \frac{u_{k +1} +u_{k -1}}{2} =f_{k} , \label{uklin}
\end{equation}
where $k =1 ,\ldots  ,n -1$, $u_{0}$, $u_{1}$ and $f_{k}$ are the given vectors from $H$.

Difference equation (\ref{uklin}) represents main part of the nonlinear equation (\ref{eq1}), and obviously corresponds main part of the equation (\ref{E1.1}).

The following lemma takes place.
\begin{lemma}\label{lemma4.1}
	Let $B$ be self-adjoint positively defined operator and  $1 -\tau  a >0$, $a =a_{2}/a_{1}$. Then for scheme (\ref{uklin}) the following \textit{a priori} estimation is valid:
\begin{align}
	\left \Vert B^{s} u_{k +1}\right \Vert  \leq  \sqrt{2} \left \Vert B^{s} u_{0}\right \Vert  +\frac{1}{a_{1}} \left \Vert B^{s -1} \frac{ \Delta u_{0}}{\tau }\right \Vert  +\frac{\tau }{2} (1 +\tau  a) \left \Vert B^{s} \frac{ \Delta u_{0}}{\tau }\right \Vert  \nonumber  \\
	\text{\quad }  + \frac{\tau }{a_{1}} \sum \limits _{i =1}^{k}\left \Vert B^{s -1} f_{i}\right \Vert  ,\text{\quad }u_{0} ,u_{1} \in D (B^{s}) ,~f_{i} \in D (B^{s -1}) ,~ \label{u1} \\
	\genfrac{\Vert }{\Vert }{}{}{ \Delta u_{k}}{\tau }  \leq  a \left \Vert u_{0}\right \Vert  +\sqrt{2} \genfrac{\Vert }{\Vert }{}{}{ \Delta u_{0}}{\tau } +\sqrt{2} \tau  \sum \limits _{i =1}^{k}\left \Vert f_{i}\right \Vert  , \label{u2}\end{align}where $s \geq 0~$, $k =1 ,\ldots  ,n -1\text{,}$ $ \Delta u_{k} =u_{k +1} -u_{k}$ $(B^{0} =I)$.
\end{lemma}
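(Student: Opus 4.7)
The plan is to recast the three-layer scheme as a three-term operator recurrence whose solution admits a closed form in the two-variable Chebyshev polynomials introduced in Section~\ref{sec:section3}. Set $P=I+\tfrac{a_1\tau}{2}B+\tfrac{a_2\tau^2}{2}B$ and $Q=I-\tfrac{a_1\tau}{2}B+\tfrac{a_2\tau^2}{2}B$; these commute with $B$ (and with each other), $P$ is boundedly invertible, and (\ref{uklin}) becomes $u_{k+1}=Xu_k-Yu_{k-1}+g_k$ with $X=2P^{-1}$, $Y=P^{-1}Q$, $g_k=\tau^2P^{-1}f_k$. A short induction using the Chebyshev recurrence (\ref{CH1}) produces
\begin{equation*}
u_{k+1}=U_k(X,Y)u_1-YU_{k-1}(X,Y)u_0+\sum_{i=1}^{k}U_{k-i}(X,Y)g_i.
\end{equation*}
Since $B\ge m_BI>0$, a pointwise check ($x=2/P>0$, $|Q|<P$, and $P+Q=2+a_2\tau^2\lambda>2$) shows that $(x(\lambda),y(\lambda))\in\Delta^{+}$ throughout the spectrum of $B$, so each scalar bound on $U_k(x,y)$ from Section~\ref{sec:section3} transfers to an operator norm bound via the functional calculus.

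For (\ref{u1}) I would substitute $u_1=u_0+\Delta u_0$ and regroup as
\begin{equation*}
u_{k+1}=(U_k-YU_{k-1})(X,Y)\,u_0+U_k(X,Y)\,\Delta u_0+\sum_{i=1}^{k}U_{k-i}(X,Y)\,g_i,
\end{equation*}
then apply $B^{s}$ and take norms. The $u_0$ piece is bounded by $\sqrt{2}\,\|B^{s}u_0\|$ via (\ref{CH11}). For the other two pieces the key identity is $I-Y=a_1\tau P^{-1}B$, equivalently $(I-Y)^{-1}=\tfrac{1}{a_1\tau}B^{-1}+\tfrac{1}{2}(1+\tau a)$. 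Factoring $U_k(X,Y)=[U_k(X,Y)(I-Y)]\,(I-Y)^{-1}$ and $\tau^2P^{-1}U_{k-i}(X,Y)=[U_{k-i}(X,Y)(I-Y)]\,\tfrac{\tau}{a_1}B^{-1}$, the bracketed operator has norm at most $1$ by (\ref{CH3.1}), and the remaining factors produce exactly the $B^{s-1}$ and $B^{s}$ contributions with the coefficients stated on the right of (\ref{u1}).

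For (\ref{u2}) I would subtract the analogous formula for $u_k$ from that for $u_{k+1}$ to obtain
\begin{equation*}
\Delta u_k=\bigl[(U_k-U_{k-1})-Y(U_{k-1}-U_{k-2})\bigr](X,Y)\,u_0+(U_k-U_{k-1})(X,Y)\,\Delta u_0+\sum_{i=1}^{k}(U_{k-i}-U_{k-1-i})(X,Y)\,g_i
\end{equation*}
(with the convention $U_{-1}\equiv0$). The crucial step---and the main obstacle---is the algebraic reduction $(U_k-U_{k-1})-y(U_{k-1}-U_{k-2})=(x-1-y)U_{k-1}$, which is immediate from (\ref{CH1}), combined with the operator identification $X-I-Y=-a_2\tau^2P^{-1}B=-a\tau(I-Y)$. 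The coefficient of $u_0$ then becomes $-a\tau(I-Y)U_{k-1}(X,Y)$, whose norm is at most $a\tau$ by (\ref{CH3.1}); dividing by $\tau$ yields the term $a\|u_0\|$. The $\Delta u_0$ contribution is bounded by $\sqrt{2}\|\Delta u_0/\tau\|$ via (\ref{CH17}), and the forcing sum by $\sqrt{2}\tau\sum_i\|f_i\|$ via (\ref{CH17}) combined with the evident bound $\|P^{-1}\|\le1$. Without the identification $X-I-Y=-a\tau(I-Y)$, the coefficient of $u_0$ could be controlled only by (\ref{CH17}), which would lose the prefactor $a$ together with the correct dependence on the initial datum; spotting this reduction is where the argument turns.
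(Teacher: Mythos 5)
Your proposal is correct and follows essentially the same route as the paper: the same operator recurrence (your $P,Q,X,Y$ are the paper's $B_0,B_1,L,S$), the same Chebyshev representation of $u_{k+1}$, the same key identities $I-Y=a_1\tau P^{-1}B$ and $X-I-Y=-a\tau(I-Y)$, and the same scalar bounds (\ref{CH3.1}), (\ref{CH11}), (\ref{CH17}) transferred by functional calculus. The only cosmetic differences are that you invert $I-Y$ explicitly where the paper writes the $\Delta u_0$ term through $B^{s-1}B_0$, and you verify membership of the spectral pairs in $\Delta^{+}$ directly rather than via $\sigma(S)\subset[-1,1]$.
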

\begin{remark}
	\label{remark4.2}
	In section \ref{sec:section2} (see Theorem \ref{theorem2.1} and Remark \ref{remark2.2}, Remark \ref{remark2.3} and Remark \ref{remark2.4}) there was shown that $\left \Vert (u_{k +1} -u_{k})/\tau \right \Vert $, $\left \Vert A^{1/2} u_{k}\right \Vert $, $\left \Vert A u_{k}\right \Vert $, $\left \Vert B^{1/2} u_{k}\right \Vert $ and $\left \vert (\gamma _{k} -\gamma _{k -1})/\tau \right \vert $ are uniformly bounded. These results along with (\ref{u1}) and (\ref{u2}) allows to obtain such \textit{a priori} estimations for semi-discrete scheme (\ref{eq1}) from where follows stability and convergence of the presented method.
\end{remark}
\begin{remark}
	\label{remark4.3}
	\textit{A priori} estimations (\ref{u1}) and (\ref{u2}) have independent meaning, as constants on the right-hand side are absolute constants (does not depend on interval length). Besides these constants cannot be improved. To obtain these estimations were possible by constructing exact representations for the solution of difference equation (\ref{uklin}).
\end{remark}
\begin{proof}[Proof of Lemma \ref{lemma4.1}]
	Let us rewrite difference equation (\ref{uklin}) in the following form to prove inequality (\ref{u1})
	\begin{equation}\label{eq:three-layer_equation}
		{B}_{0}u_{k +1} -2 I u_{k} +{B}_{1}u_{k -1} =\tau ^{2} f_{k}\,,
	\end{equation}
	where
	\begin{equation*}
		{B}_{0}=I +\frac{\tau }{2} a_{1} B +\frac{\tau ^{2}}{2} a_{2} B\,,\quad{B}_{1} = {B}_{0} - \tau{a}_{1}{B}\,.
	\end{equation*}
	From \eqref{eq:three-layer_equation} we get
	\begin{equation}
		u_{k +1} =L u_{k} -S u_{k -1} +\frac{\tau ^{2}}{2} L f_{k} , \label{rek}
	\end{equation}
	where $L = 2{B}_{0}^{-1}$, $S = {B}_{1}{B}_{0}^{-1}$.
	
	Let us note that $L$ and $S$ are self-adjoint, bounded linear operators in a Hilbert space $H$. Besides it is obvious that $L$ and $S$ are commutative.
	
	Using mathematical induction for (\ref{rek}) recurrence relation we get
	\begin{equation}
		u_{k +1} =U_{k} (L ,S) u_{1} -S U_{k -1} (L ,S) u_{0} +\frac{\tau ^{2}}{2} \sum \limits _{i =1}^{k}U_{k -i} (L ,S) L f_{i} , \label{uk+1}
	\end{equation}
	where operator polynomial $U_{k} (L ,S)$ satisfies the following recurrence relation:
	\begin{align}
		& U_{k} (L ,S)  = L U_{k -1} (L ,S) -S U_{k -2} (L ,S) ,\text{\quad }k =1 ,2 ,\ldots \text{\quad } , \label{UKL} \\
		& U_{0} (L ,S) = I ,\text{\quad }U_{ -1} (L ,S) =0. \nonumber
	\end{align}
    Scalar polynomials $U_{k} (x ,y) $ corresponding to $U_{k} (L ,S)$ satisfy (\ref{CH1}) recurrence relation.
	
	From (\ref{uk+1}) using simple transformation we have
	\begin{align}
		& u_{k +1} = \tau  U_{k} (L ,S) \frac{ \Delta u_{0}}{\tau } +\left (U_{k} (L ,S) -S U_{k -1} (L ,S)\right ) u_{0} +\frac{\tau ^{2}}{2} \sum \limits _{i =1}^{k}U_{k -i} (L ,S) L f_{i} . \label{uk.1}
	\end{align}
	If we apply operator $B^{s}$ ($s \geq 0$) on both sides of equality (\ref{uk.1}) and move on to the norm we obtain
	\begin{align}
		& \left \Vert B^{s} u_{k +1}\right \Vert  \leq \tau  \left \Vert B^{s} U_{k} (L ,S) \frac{ \Delta u_{0}}{\tau }\right \Vert  +\left \Vert U_{k} (L ,S) -S U_{k -1} (L ,S)\right \Vert  \left \Vert B^{s} u_{0}\right \Vert  \nonumber  \\
		&  +\frac{\tau ^{2}}{2} \sum \limits _{i =1}^{k}\left \Vert B^{s} U_{k -i} (L ,S) L f_{i}\right \Vert  . \label{E3.1}
	\end{align}
	Using simple transformations we get:
	\begin{equation}\label{E3.2}
		S  = \frac{1}{1 +\tau  a} L -\frac{1 -\tau  a}{1 +\tau  a} I\,,
	\end{equation}
	\begin{equation}\label{ELS}
		I-S =\frac{\tau  a_{1}}{2} B L\,.
	\end{equation}
	From here we have
	\begin{align}
		& \tau  \left \Vert B^{s} U_{k} (L ,S) L f\right \Vert  = \tau  \left \Vert U_{k} (L ,S) B L (B^{s -1} f)\right \Vert  \nonumber  \\
		&  = \frac{2}{a_{1}} \left \Vert U_{k} (L ,S) (I -S) (B^{s -1} f)\right \Vert  \leq  \frac{2}{a_{1}} \left \Vert U_{k} (L ,S) (I -S)\right \Vert  \left \Vert B^{s -1} f\right \Vert  , \label{E3.3}
	\end{align}
	where $f \in D (B^{s -1})$.
	
	As it is known the norm of an operator function, when the argument of the function represents self-adjoint bounded operator, is equal to $C$-norm of the corresponded scalar function (see, \textit{e.g.}, \cite{Reed}, Chapter VII). Using this and representation (\ref{E3.2}) we have
	\begin{align}
	    & \left \Vert U_{k} (L ,S) (I -S)\right \Vert  \leq \max_{y \in \sigma  (S)}\left \vert U_{k} (\eta  (y) ,\text{\/}y)(1 -y)\right \vert  , \label{E3.4}
    \end{align}
    where $\eta  (y) =(1 +\tau  a) y +(1 -\tau  a)$.
    
    Let us estimate the spectrum of operator $L$. As, according to the condition, $B$ is a self-adjoint and positively defined operator, therefore, we have $\sigma  (L) \subset \left[ 0 ,2 \right]$. According to this relation, from the representation (\ref{E3.2}) we have
    \begin{equation}
    	\sigma  (S) \subset [ -1 ,1] . \label{u15.1}
    \end{equation}
    If we consider relation (\ref{u15.1}) and estimation (\ref{CH3.1}) we get
    \begin{align*}
    	& \max_{y \in \sigma  (S)}\left \vert U_{k} (\eta  (y) ,\text{\/}y)(1 -y)\right \vert  \leq  \max_{y \in [ -1 ,1]}\left \vert U_{k} (\eta  (y) ,\text{\/}y)(1 -y)\right \vert  \\
    	&  \leq \max_{(x ,y) \in  \Delta ^{ +}}\left \vert U_{k} (x ,\text{\/}y)(1 -y)\right \vert  \leq 1.
    \end{align*}
    From here and (\ref{E3.4}) it follows
    \begin{equation}
    	\left \Vert U_{k} (L ,S) (I -S)\right \Vert  \leq 1. \label{u17}
    \end{equation}
    It is obvious, from (\ref{E3.3}) using (\ref{u17}) we have
    \begin{equation}
    	\tau  \left \Vert B^{s} U_{k} (L ,S) L f\right \Vert  \leq \frac{2}{a_{1}} \left \Vert B^{s -1} f\right \Vert  ,\text{\quad }f \in D (B^{s -1}) . \label{u18}
    \end{equation}
    Also from (\ref{u18}) follows the inequality
    \begin{align}
    	& \tau  \left \Vert B^{s} U_{k} (L ,S) f\right \Vert   \leq  \frac{1}{a_{1}} \left \Vert B^{s -1} {B}_{0} f\right \Vert  \leq \frac{1}{a_{1}} \left \Vert B^{s -1} f\right \Vert  +\frac{\tau }{2} (1 +\tau  a) \left \Vert B^{s} f\right \Vert  , \label{u19}
    \end{align}
    where $f \in D (B^{s})$.
    
    Let us estimate operator $U_{k} -S U_{k -1}$. Analogously, we have
    \begin{equation*}
    	\left \Vert U_{k} (L ,S) -S U_{k -1} (L ,S)\right \Vert  \leq \max_{y \in \sigma  (S)}\left \vert U_{k} (\eta  (y),y) -y U_{k -1} (\eta  (y),y)\right \vert \text{.}
    \end{equation*}
    If we consider relation (\ref{u15.1}) and estimation (\ref{CH11}) we get
    \begin{align*}
    	& \max_{y \in \sigma  (S)}\left \vert U_{k} (\eta  (y) ,\text{\/}y) -y U_{k -1} (\eta  (y) ,\text{\/}y)\right \vert  \leq \max_{(x ,y) \in  \Delta ^{ +}}\left \vert U_{k} (x ,\text{\/}y)1 -y U_{k -1} (x ,\text{\/}y)\right \vert  \leq \sqrt{2}\text{.}
    \end{align*}
    So, we have
    \begin{equation}
    	\left \Vert U_{k} (L ,S) -S U_{k -1} (L ,S)\right \Vert  \leq \sqrt{2} . \label{u20}
    \end{equation}
    If we insert estimations (\ref{u18}), (\ref{u19}) and (\ref{u20}) into inequality (\ref{E3.1}) we obtain estimation (\ref{u1}).
    
    Let us prove estimation (\ref{u2}). From the formula \eqref{uk.1} according to the recurrence relation \eqref{UKL} we get
    \begin{align}
    	& \genfrac{\Vert }{\Vert }{}{}{ \Delta u_{k}}{\tau }  \leq  \tau ^{ -1} \left \Vert (L -S -I) U_{k -1} (L ,S)\right \Vert  \left \Vert u_{0}\right \Vert  \nonumber  \\
    	& +\left \Vert U_{k} -U_{k -1}\right \Vert  \genfrac{\Vert }{\Vert }{}{}{ \Delta u_{0}}{\tau } +\frac{\tau }{2} \sum \limits _{i =1}^{k}\left \Vert U_{k -i} -U_{k -i -1}\right \Vert  \left \Vert L\right \Vert  \left \Vert f_{i}\right \Vert  . \label{u30}
    \end{align}
    According to equality (\ref{ELS}) we have
    \begin{align*}
    	L -S -I  = I -S -(2 I -L) = I -S -(1 +\tau  a) (I -S) = -\tau  a (I -S)\text{.}
    \end{align*}
    If we consider this representation and estimation (\ref{u17}), we get
    \begin{equation}
    	\tau ^{ -1} \left \Vert (L -S -I) U_{k -1} (L ,S)\right \Vert  =a \left \Vert U_{k} (L ,S) (I -S)\right \Vert  \leq a . \label{u31}
    \end{equation}
    
    Analogously to (\ref{u20}), according to estimation (\ref{CH17}) we obtain
    \begin{equation}
    	\left \Vert U_{k} (L ,S) -U_{k -1} (L ,S)\right \Vert  \leq \sqrt{2} . \label{u32}
    \end{equation}
    From inequality (\ref{u30}) using estimations (\ref{u31}), (\ref{u32}) and $\left \Vert L\right \Vert  \leq 2$ \textit{a priori} estimation (\ref{u2}) follows.
\end{proof}
\begin{remark}
\label{remark4.4}
Analogously to (\ref{u2}) the following estimation can be obtained
\begin{equation}\left \Vert B^{s} \frac{ \Delta u_{k}}{\tau }\right \Vert  \leq a \left \Vert B^{s} u_{0}\right \Vert  +\sqrt{2} \left \Vert B^{s} \frac{ \Delta u_{0}}{\tau }\right \Vert  +\sqrt{2} \tau  \sum \limits _{i =1}^{k}\left \Vert B^{s} f_{i}\right \Vert  , \label{u2.1}
\end{equation}
where $u_{0} ,~u_{1} ,f_{i} \in D (B^{s}) ,\; s \geq 0$.
\end{remark}

\section{The \textit{a priori} estimates for perturbation of the solution of the discrete problem}\label{sec:section5}
The goal of the section is to show the stability of the scheme \eqref{eq1}. As additivity does not take place for nonlinear cases, therefore it is natural that we try to obtain the \textit{a priori} estimates exactly for the solution perturbation. From here (analogously to a linear problem) automatically
follows the stability and convergence of the nonlinear scheme.

In this section, based on the results of the previous sections, we obtain the \textit{a priori} estimates for the solution of the semi-discrete scheme (\ref{eq1}) and perturbation of the corresponding first-order difference.

The following theorem takes place (below everywhere $c$ denotes positive constant).
\begin{theorem}\label{theorem5.1}
	Let $u_{k}$ and $\overline{u}_{k}$ be solutions of difference equation (\ref{eq1}) corresponding to initial vectors $(u_{0} ,\;u_{1} ,\;f_{k})$ and $(\overline{u}_{0} ,\;\overline{u}_{1} ,\;\overline{f}_{k})$, components of which are sufficiently smooth. Then for $z_{k} =u_{k} -\overline{u}_{k}$ the following estimates are true
	\begin{align}
		& \left \Vert B^{1/2} z_{k +1}\right \Vert  +\genfrac{\Vert }{\Vert }{}{}{ \Delta z_{k}}{\tau } \nonumber  \\
		&  \leq  c \left (\left \Vert B^{1/2} z_{0}\right \Vert  +\genfrac{\Vert }{\Vert }{}{}{ \Delta z_{0}}{\tau } +\tau  \left \Vert B^{1/2} \frac{ \Delta z_{0}}{\tau }\right \Vert  +\tau  \sum \limits _{i =1}^{k}\left \Vert f_{i} -\overline{f}_{i}\right \Vert \right )\; , \label{T2_2}\end{align}
	where $k =1 ,\ldots  ,n -1$ , $ \Delta z_{k} =z_{k +1} -z_{k}$.
\end{theorem}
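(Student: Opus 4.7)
The plan is to subtract the scheme (\ref{eq1}) for $\bar u_k$ from the one for $u_k$, producing a three-layer linear equation of the form (\ref{uklin}) for $z_k=u_k-\bar u_k$ whose right-hand side $g_k$ collects $f_k-\bar f_k$ together with the differences of all the nonlinear terms. I then invoke Lemma \ref{lemma4.1} with $s=1/2$ in (\ref{u1}) and (\ref{u2}) to bound $\|B^{1/2}z_{k+1}\|$ and $\|\Delta z_k/\tau\|$ by the initial data plus $\tau\sum_i\|B^{-1/2}g_i\|$ and $\tau\sum_i\|g_i\|$. The goal is to dominate these sums by $c\tau\sum_{i=1}^k\delta_i$ with $\delta_k:=\|B^{1/2}z_k\|+\|\Delta z_{k-1}/\tau\|$ and then close the estimate with the discrete Gr\"onwall lemma.

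For each nonlinear term I would use a splitting of the form
\begin{equation*}
a_{1,k}A\tfrac{u_{k+1}+u_{k-1}}{2}-\bar a_{1,k}A\tfrac{\bar u_{k+1}+\bar u_{k-1}}{2}=a_{1,k}A\tfrac{z_{k+1}+z_{k-1}}{2}+(a_{1,k}-\bar a_{1,k})A\tfrac{\bar u_{k+1}+\bar u_{k-1}}{2},
\end{equation*}
and analogously for the $d_k$, $a_{3,k}$, $N$, $C$, $M$ pieces. The key technical point that makes (\ref{u1}) with $s=1/2$ useful is
\begin{equation*}
\|B^{-1/2}Av\|\le b_0\|v\|,\qquad v\in D(A),
\end{equation*}
which follows from (\ref{E1.6}) by duality: $\|AB^{-1/2}\|\le b_0$, hence $\|B^{-1/2}A\|=\|(AB^{-1/2})^*\|\le b_0$. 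Since $a_{1,k}$, $d_k$, $a_{3,k}$ are uniformly bounded by the smoothness of $\psi_1,\psi_2,\psi_3$ combined with Theorem \ref{theorem2.1} and Remark \ref{remark2.4}, each linear-in-$z$ piece contributes $O(\|B^{1/2}z_{k\pm 1}\|)$ to $\|B^{-1/2}g_k\|$. For the coefficient differences I would use
\begin{equation*}
|\gamma_k-\bar\gamma_k|\le\|A^{1/2}(u_k+\bar u_k)\|\cdot\|A^{1/2}z_k\|\le c\,\|B^{1/2}z_k\|
\end{equation*}
(via \eqref{eq:sqrt_A}, (\ref{E1.6}) and Remark \ref{remark2.2}) to convert the coefficient variation into a bound linear in $\delta_{k\pm 1}$. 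The $Cu_k$ and $M(u_k)$ differences are handled by (\ref{E1.3}) and the Lipschitz assumption on $M$ respectively.

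The main obstacle is the term $(d_k-\bar d_k)A\tfrac{\bar u_{k+1}+\bar u_{k-1}}{2}$, because $d_k=\bigl(\psi_2(\gamma_{k+1})-\psi_2(\gamma_{k-1})\bigr)/(2\tau)$ carries a spurious factor $1/\tau$. I would attack it by writing
\begin{equation*}
2\tau(d_k-\bar d_k)=\psi_2'(\eta_{k+1})\bigl[(\gamma_{k+1}-\gamma_{k-1})-(\bar\gamma_{k+1}-\bar\gamma_{k-1})\bigr]+\bigl[\psi_2'(\eta_{k+1})-\psi_2'(\eta_{k-1})\bigr](\gamma_{k-1}-\bar\gamma_{k-1}),
\end{equation*}
and rewriting the first bracket as $\tau$ times inner products involving $\Delta z_j/\tau$, $\Delta u_j/\tau$ and $A^{1/2}(u_j+\bar u_j)$, which recovers the compensating $\tau$ and yields a bound of order $\tau(\|B^{1/2}z_{k\pm 1}\|+\|\Delta z_{k-1}/\tau\|+\|\Delta z_k/\tau\|)$; the second bracket, controlled by the $C^1$-smoothness of $\psi_2'$ together with the uniform estimates of Section \ref{sec:section2}, contributes only higher-order perturbation terms. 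Collecting everything gives the scalar inequality $\delta_{k+1}\le c\tau\sum_{i=1}^k\delta_i+c\bigl(\|B^{1/2}z_0\|+\|\Delta z_0/\tau\|+\tau\|B^{1/2}\Delta z_0/\tau\|+\tau\sum_i\|f_i-\bar f_i\|\bigr)$, and the discrete Gr\"onwall lemma delivers (\ref{T2_2}).
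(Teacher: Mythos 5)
Your overall strategy coincides with the paper's: subtract the two copies of (\ref{eq1}) to obtain a linear scheme of type (\ref{uklin}) for $z_k$ with right-hand side $g_k$, apply Lemma \ref{lemma4.1}, bound $g_k$ linearly in $\left\Vert A z_{k\pm 1}\right\Vert$, $\left\Vert B^{1/2}z_{k\pm1}\right\Vert$ and the difference quotients using the uniform bounds of Section \ref{sec:section2}, and close with the discrete Gr\"onwall lemma after absorbing the top term for small $\tau$. Your treatment of the $a_{1,k}$, $a_{3,k}$, $C$, $N$, $M$ pieces and of $\left\vert\gamma_k-\overline{\gamma}_k\right\vert$ is the same as in the paper (the paper bounds $\Vert g_i\Vert$ directly and uses \eqref{E1.6} rather than your dual estimate $\Vert B^{-1/2}A\Vert\le b_0$, but these are equivalent).

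The gap is in the one term you correctly single out as the main obstacle. With pointwise mean-value points $\eta_{k\pm1}$ lying between $\gamma_{k\pm1}$ and $\overline{\gamma}_{k\pm1}$, the second bracket $\bigl[\psi_2'(\eta_{k+1})-\psi_2'(\eta_{k-1})\bigr](\gamma_{k-1}-\overline{\gamma}_{k-1})$ is \emph{not} a higher-order perturbation: all one can say is $\left\vert\eta_{k+1}-\eta_{k-1}\right\vert\le\left\vert\gamma_{k+1}-\gamma_{k-1}\right\vert+\left\vert\gamma_{k+1}-\overline{\gamma}_{k+1}\right\vert+\left\vert\gamma_{k-1}-\overline{\gamma}_{k-1}\right\vert\le c\tau+c\left(\left\Vert Az_{k+1}\right\Vert+\left\Vert Az_{k-1}\right\Vert\right)$, so after dividing by $2\tau$ you are left with a contribution of order $\tau^{-1}\left(\left\Vert Az_{k+1}\right\Vert+\left\Vert Az_{k-1}\right\Vert\right)\left\Vert Az_{k-1}\right\Vert$. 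Even using the uniform bound $\left\Vert Az_j\right\Vert\le c$ this is $O\!\left(\tau^{-1}\left\Vert Az_{k-1}\right\Vert\right)$, and after multiplication by the $\tau$ in $\tau\sum_i\left\Vert g_i\right\Vert$ it produces $\sum_i\varepsilon_i$ with no factor $\tau$; Gr\"onwall then yields a constant of size $(1+c)^n$ which blows up as $n\to\infty$. The paper's Lemma \ref{lemma5.2} avoids this by factoring $d_k=\delta\gamma_k\, I_{1,k}$ with the \emph{aligned} parametrization $l_k(\xi)=\gamma_{k-1}+(\gamma_{k+1}-\gamma_{k-1})\xi$, so that $d_k-\overline{d}_k=(\delta\gamma_k-\delta\overline{\gamma}_k)I_{1,k}+\delta\overline{\gamma}_k I_{2,k}$: the dangerous $1/\tau$ sits only with $\delta\overline{\gamma}_k$, which is uniformly bounded by Remark \ref{remark2.4}, while $I_{2,k}$ is bounded by $c\left(\left\vert\gamma_{k+1}-\overline{\gamma}_{k+1}\right\vert+\left\vert\gamma_{k-1}-\overline{\gamma}_{k-1}\right\vert\right)$ with no $\tau^{-1}$, giving (\ref{dk1}). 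Your decomposition can be repaired in exactly this spirit by replacing the two unrelated mean-value points with integral means over a common parameter, but as written the step fails.
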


In this section and the following ones the notation $\delta{z}_{k}=\dfrac{z_{k +1} -z_{k -1}}{2\tau}$ is applied.

Let us prove the corresponding auxiliary lemma.
\begin{lemma}\label{lemma5.2}
	The following inequality is true
	\begin{equation}
		\left \vert d_{k} -\overline{d}_{k}\right \vert  \leq c \left (\left \Vert A z_{k +1}\right \Vert  +\left \Vert A z_{k -1}\right \Vert  +\Vert \delta{z}_{k}\right \Vert ) , \label{dk1}
	\end{equation}
	where $z_{k} =u_{k} -\overline{u}_{k}$ ,
	\begin{align*}
		d_{k}  =  \delta{\psi_{2}}\left( \gamma_{k} \right)\,,\quad\overline{d}_{k} = \delta{\psi_{2}}\left( \overline{\gamma}_{k} \right)\,,\quad \gamma _{k} =\left \Vert A^{1/2} u_{k}\right \Vert ^{2}\,,\quad\overline{\gamma }_{k} =\left \Vert A^{1/2} \overline{u}_{k}\right \Vert ^{2}\,.
	\end{align*}
\end{lemma}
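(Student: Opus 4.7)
The plan is to treat $d_{k}-\overline d_{k}$ as a discrete time derivative of a difference, and to use two mean-value-type identities in concert so that the unwanted prefactor $1/(2\tau)$ is cancelled. If one naively applied the Lipschitz property of $\psi_{2}$ separately to $\psi_{2}(\gamma_{k+1})-\psi_{2}(\overline\gamma_{k+1})$ and to $\psi_{2}(\gamma_{k-1})-\psi_{2}(\overline\gamma_{k-1})$, one would obtain only a bound of order $\tau^{-1}\|A^{1/2}z_{k\pm 1}\|$, which is useless for the Gr\"onwall step that the authors are aiming at in Section~\ref{sec:section5}. The whole point is to extract a compensating factor $\tau$ from the \emph{combined} difference.

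To this end I would introduce the divided difference
\begin{equation*}
G(a,b)=\int_{0}^{1}\psi_{2}'\bigl(b+t(a-b)\bigr)\,dt,
\end{equation*}
so that $\psi_{2}(a)-\psi_{2}(b)=(a-b)G(a,b)$, and set $p_{j}=\gamma_{j}-\overline\gamma_{j}$, $G_{j}=G(\gamma_{j},\overline\gamma_{j})$. Then the simple rearrangement
\begin{equation*}
2\tau(d_{k}-\overline d_{k})=p_{k+1}G_{k+1}-p_{k-1}G_{k-1}=p_{k+1}(G_{k+1}-G_{k-1})+(p_{k+1}-p_{k-1})G_{k-1}
\end{equation*}
splits the task in two. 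For the first summand, Theorem~\ref{theorem2.1} and Remark~\ref{remark2.2} place $\gamma_{j}$ and $\overline\gamma_{j}$ in a fixed bounded set; since $\psi_{2}\in C^{2}$, the map $(a,b)\mapsto G(a,b)$ is $C^{1}$ there, so $|G_{k+1}-G_{k-1}|\le c(|\gamma_{k+1}-\gamma_{k-1}|+|\overline\gamma_{k+1}-\overline\gamma_{k-1}|)\le c\tau$ by Remark~\ref{remark2.4} applied to both $u$ and $\overline u$. Simultaneously, $|p_{k+1}|=\bigl|\|A^{1/2}u_{k+1}\|^{2}-\|A^{1/2}\overline u_{k+1}\|^{2}\bigr|\le c\,\|A^{1/2}z_{k+1}\|$ by uniform boundedness of $A^{1/2}u_{k+1}$ and $A^{1/2}\overline u_{k+1}$, and~\eqref{eq:sqrt_A} upgrades this to $c\,\|Az_{k+1}\|$. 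Hence $|p_{k+1}(G_{k+1}-G_{k-1})|\le c\tau\|Az_{k+1}\|$.

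For the second summand, $|G_{k-1}|\le c$ follows from continuity of $\psi_{2}'$ on the bounded set containing the $\gamma$'s, so I only need to pull out $\tau$ from $p_{k+1}-p_{k-1}$. Applying $\|A^{1/2}x\|^{2}-\|A^{1/2}y\|^{2}=(A^{1/2}(x+y),A^{1/2}(x-y))$ separately to $u$ and to $\overline u$, and then adding and subtracting one mixed cross term, I obtain the key identity
\begin{equation*}
p_{k+1}-p_{k-1}=\bigl(A(z_{k+1}+z_{k-1}),\,u_{k+1}-u_{k-1}\bigr)+\bigl(A(\overline u_{k+1}+\overline u_{k-1}),\,z_{k+1}-z_{k-1}\bigr),
\end{equation*}
after moving $A^{1/2}$ across each inner product. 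Now $\|u_{k+1}-u_{k-1}\|\le c\tau$ by Theorem~\ref{theorem2.1}, $\|z_{k+1}-z_{k-1}\|=2\tau\|\delta z_{k}\|$ by definition, and $\|A(\overline u_{k+1}+\overline u_{k-1})\|$ is uniformly bounded by Remark~\ref{remark2.2}. Collecting the pieces yields $|p_{k+1}-p_{k-1}|\le c\tau(\|Az_{k+1}\|+\|Az_{k-1}\|+\|\delta z_{k}\|)$. Dividing the full inequality by $2\tau$ delivers~\eqref{dk1}. The hard part is precisely this algebraic splitting of $p_{k+1}-p_{k-1}$ into an "$A$(difference of $z$'s) paired with smooth displacement of $u$" piece plus an "$A$(bounded $\overline u$'s) paired with $\tau\,\delta z_{k}$" piece; everything else is just invoking boundedness results from Section~\ref{sec:section2}.
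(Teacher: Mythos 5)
Your proof is correct, and every ingredient matches the paper's: the integral (Hadamard) representation of differences of $\psi_{2}$, a discrete product rule, the bilinear identity for differences of $\Vert A^{1/2}\cdot\Vert^{2}$, and the uniform bounds of Section~\ref{sec:section2} applied to both $u_{k}$ and $\overline{u}_{k}$. The decomposition itself, however, is the transpose of the paper's. The paper first factors each of $d_{k},\overline{d}_{k}$ as a time divided difference times an integral mean of $\psi_{2}'$ along the segment from $\gamma_{k-1}$ to $\gamma_{k+1}$, and then splits $d_{k}-\overline{d}_{k}=\left(\delta\gamma_{k}-\delta\overline{\gamma}_{k}\right)I_{1,k}+\delta\overline{\gamma}_{k}I_{2,k}$; you first factor the perturbation $\psi_{2}(\gamma_{j})-\psi_{2}(\overline{\gamma}_{j})=p_{j}G_{j}$ at each time level and only then difference in time. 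As a result the cross term is handled with the roles of the two factors swapped: the paper bounds $\delta\overline{\gamma}_{k}$ by Remark~\ref{remark2.4} and extracts $\Vert Az_{k\pm1}\Vert$ from $I_{2,k}$ via $\psi_{2}''$, whereas you bound $p_{k+1}$ by $c\Vert Az_{k+1}\Vert$ (the paper's estimate (\ref{gamak})) and extract the compensating factor $\tau$ from $G_{k+1}-G_{k-1}$ via $\psi_{2}''$ and Remark~\ref{remark2.4} (which, as you note, must also be invoked for $\overline{u}$). The remaining terms --- your $(p_{k+1}-p_{k-1})G_{k-1}$ and the paper's $(\delta\gamma_{k}-\delta\overline{\gamma}_{k})I_{1,k}$ --- coincide up to the factor $2\tau$ and a slightly different but equally valid grouping of the bilinear identity. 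Both routes require exactly the same hypotheses ($\psi_{2}\in C^{2}$ plus the Section~\ref{sec:section2} bounds) and yield the same constant structure; yours buys nothing extra, but it does make more explicit where the factor $\tau$ that defeats the $1/(2\tau)$ in $\delta\psi_{2}$ comes from.
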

\begin{proof}
	The following representation is valid
	\begin{align}
		& d_{k} -\overline{d}_{k}  = \left(\delta\gamma_{k} - \delta\overline{\gamma}_{k} \right) I_{1 ,k} +\delta\overline{\gamma}_{k}I_{2 ,k}\,,\label{dk1.1}
	\end{align}
	where
	\begin{align*}
		I_{1 ,k}  = \int \limits _{0}^{1}{\psi _{2}^{ \prime } \left(l_{k} \left( \xi \right) \right)}\mathrm{d}\xi \,,\quad I_{2 ,k} = \int \limits _{0}^{1}{\left[\psi _{2}^{ \prime } \left(l_{k} \left( \xi \right) \right) -\psi _{2}^{ \prime } \left (\overline{l}_{k} \left( \xi \right) \right ) \right]}\mathrm{d}\xi\,,
	\end{align*}
	and where
	\begin{equation*}
		l_{k} \left( \xi \right)  =  \gamma _{k -1} +\left (\gamma _{k +1} -\gamma _{k -1}\right ) \xi \text{,} \quad\overline{l}_{k} \left( \xi \right) = \overline{\gamma }_{k -1} +\left (\overline{\gamma }_{k +1} -\overline{\gamma }_{k -1}\right ) \xi \text{.}
	\end{equation*}
	Using standard transformations we get
	\begin{align*}
		&\delta\gamma_{k} - \delta\overline{\gamma}_{k} = \left (A z_{k +1} ,\delta{u}_{k} +\delta\overline{u}_{k}\right ) +\left (\delta{z}_{k} ,A (u_{k -1} +\overline{u}_{k -1})\right )\text{.}
	\end{align*}
	From here using the Schwarz inequality follows
	\begin{align}
		& \vert \delta\gamma_{k} - \delta\overline{\gamma}_{k} \vert \leq  \left \Vert A z_{k +1}\right \Vert \left( \Vert \delta{u}_{k} \Vert + \Vert \delta\overline{u}_{k} \Vert \right )+ \Vert \delta{z}_{k} \Vert \left (\left \Vert A u_{k -1}\right \Vert  +\left \Vert A \overline{u}_{k -1}\right \Vert \right ) . \label{dk1.5}
	\end{align}
	If we consider that all terms on the right-hand side of the inequality (\ref{dk1.5}) are bounded (see Remark \ref{remark2.2} and Remark \ref{remark2.3}), then we obtain
	\begin{align}
		& \vert \delta\gamma_{k} - \delta\overline{\gamma}_{k} \vert  \leq   c \left (\left \Vert A z_{k +1}\right \Vert  + \Vert \delta{z}_{k} \Vert \right ) . \label{dk1.6}
	\end{align}
	
	Let us estimate integrals $I_{1 ,k}$ and $I_{2 ,k }$. If we use the change of variables for the integral $I_{2 ,k }$ we obtain
	\begin{equation}
		I_{2 ,k} =\int \limits _{0}^{1}\int \limits _{\overline{l}_{k} (\xi )}^{l_{k} (\xi )}\psi _{2}^{ \prime  \prime } \left (\eta \right ) d \eta  d \xi\,. \label{dk1.7}
	\end{equation}
	From (\ref{dk1.7}) we have
	\begin{equation}
		\left \vert I_{2 ,k}\right \vert  \leq c \int \limits _{0}^{1}\left \vert \chi  (\xi ) \right \vert  d \xi  , \label{dk1.8}
	\end{equation}
	where
	\begin{align*}
		&\chi  (\xi ) =\left (\gamma _{k -1} -\overline{\gamma }_{k -1}\right ) +\left (\left (\gamma _{k +1} -\overline{\gamma }_{k +1}\right ) -\left (\gamma _{k -1} -\overline{\gamma }_{k -1}\right )\right ) \xi\,,\\
		&c =\max \left \vert \psi _{2}^{ \prime  \prime } \left (\eta \right )\right \vert  < +\infty\,,\quad 0 \leq \eta  \leq \max_{k}(\gamma _{k} ,\overline{\gamma }_{k}) < +\infty\,.
	\end{align*}
	
	Regarding inequality (\ref{dk1.8}) it should be noted that since $\gamma _{k}$ and $\overline{\gamma }_{k}$ are uniformly bounded (see Remark \ref{remark2.2}) therefore $\max_{k}(\gamma _{k} ,\overline{\gamma }_{k}) < +\infty $,  i.e. the interval where we have to find a maximum value of $\left \vert \psi _{2}^{ \prime  \prime } \left (\eta \right )\right \vert  $ is finite. From here follows that $\max \left \vert \psi _{2}^{ \prime  \prime } \left (\eta \right )\right \vert  < +\infty $ (according to the condition $\psi _{2}^{ \prime  \prime } \left (\eta \right ) $ is continuous). So, from (\ref{dk1.8}) we have
	\begin{align}
		& \left \vert I_{2 ,k}\right \vert  \leq  c \int \limits _{0}^{1}\left \vert \chi  (\xi )\right \vert  d \xi  \leq  c \left (\left \vert \gamma _{k -1} -\overline{\gamma }_{k -1}\right \vert  +\left \vert \gamma _{k +1} -\overline{\gamma }_{k +1}\right \vert \right )\,. \label{dk3}
	\end{align}
	As vectors $A^{1/2} u_{k}$ are uniformly bounded, for the difference $\gamma _{k} -\overline{\gamma }_{k}$ the following estimation is valid
	\begin{align}
		& \left \vert \gamma _{k} -\overline{\gamma }_{k}\right \vert   =  \left (\sqrt{\gamma _{k}} +\sqrt{\overline{\gamma }_{k}}\right ) \left \vert \sqrt{\gamma _{k}} -\sqrt{\overline{\gamma }_{k}}\right \vert \leq  c \left \Vert A^{1/2} z_{k}\right \Vert  \leq c \left \Vert A z_{k}\right \Vert  . \label{gamak}
	\end{align}
	From (\ref{dk3}) according to (\ref{gamak}) follows
	\begin{equation}
		\left \vert I_{2 ,k}\right \vert  \leq c \left (\left \Vert A z_{k +1}\right \Vert  +\left \Vert A z_{k -1}\right \Vert \right ) . \label{dk4}
	\end{equation}
	For the integrals $I_{1 ,k }$ the following estimation is valid
	\begin{equation}
		\left \vert I_{1 ,k}\right \vert  \leq c ,\text{\quad }c =\max \left \vert \psi _{2}^{ \prime } \left (s\right )\right \vert  < +\infty  ,\text{\quad }0 \leq s \leq \max_{k}(\gamma _{k} ,\overline{\gamma }_{k}) < +\infty \; . \label{dk5}
	\end{equation}
	From (\ref{dk1.1}) according to inequalities (\ref{dk1.6}), (\ref{dk4}), (\ref{dk5}) and Remark \ref{remark2.4} follows (\ref{dk1}).
\end{proof}

Let us return to proof of \textbf{Theorem \ref{theorem5.1}.}
\begin{proof}
	According to (\ref{eq1}) difference $z_{k} =u_{k} -\overline{u}_{k}$ satisfies the following equation
	\begin{equation}
		\frac{z_{k +1} -2 z_{k} +z_{k -1}}{\tau ^{2}} +a_{1} B \frac{z_{k +1} +z_{k -1}}{2} +a_{2} B \frac{z_{k +1} +z_{k -1}}{2} = -\frac{1}{2} g_{k} ,\text{\quad } \label{zk1}
	\end{equation}
	where $k =1 ,\ldots  ,n -1\text{,}$
	\begin{align*}
		& g_{k} = g_{1 ,k} +g_{2 ,k} +g_{3 ,k} +g_{4 ,k} +g_{5 ,k}\text{,} \\
		& g_{1 ,k} = a_{1 ,k} A \left (u_{k +1} +u_{k -1}\right ) -\overline{a}_{1 ,k} A \left (\overline{u}_{k +1} +\overline{u}_{k -1}\right )\text{,} \\
		& g_{2 ,k}  = d_{k} A \left (u_{k +1} +u_{k -1}\right ) -\overline{d}_{k} A \left (\overline{u}_{k +1} +\overline{u}_{k -1}\right )\text{,} \\
		& g_{3 ,k} = a_{3 ,k} A \left (u_{k +1} +u_{k -1}\right ) -\overline{a}_{3 ,k} A \left (\overline{u}_{k +1} +\overline{u}_{k -1}\right )\text{,} \\
		& g_{4 ,k} =  2 \left (C z_{k} +N\delta{z}_{k} \right )\,,\quad g_{5 ,k} = 2 \left (\left (M \left (u_{k}\right ) -M \left (\overline{u}_{k}\right )\right ) -(f_{k} -\overline{f}_{k})\right )\text{,}
	\end{align*}
	and where $\overline{a}_{1 ,k} =\widetilde{\psi }_{1} \left (\overline{\gamma }_{k +1} ,\overline{\gamma }_{k -1}\right )\text{,}$ $\overline{\gamma }_{k} =\left \Vert A^{1/2} \overline{u}_{k}\right \Vert ^{2}$ (analogously are defined $\overline{a}_{3 ,k}$ and $\overline{d}_{k}$).
	
	For scheme (\ref{zk1}), according to Lemma \ref{lemma4.1} the following \textit{a priori} estimations are valid (see (\ref{u1}) and (\ref{u2})):
	\begin{align}
		\left \Vert B^{1/2} z_{k +1}\right \Vert &\leq  c\left( \left \Vert B^{1/2} z_{0}\right \Vert  + \left \Vert \frac{ \Delta z_{0}}{\tau }\right \Vert + \tau \left \Vert B^{1/2} \frac{ \Delta z_{0}}{\tau }\right \Vert +\tau \sum \limits _{i =1}^{k}\left \Vert g_{i}\right \Vert \right) \,, \label{zk2} \\
		\genfrac{\Vert }{\Vert }{}{}{ \Delta z_{k}}{\tau } &\leq c\left( \left \Vert z_{0}\right \Vert  + \genfrac{\Vert }{\Vert }{}{}{ \Delta z_{0}}{\tau } + \tau  \sum \limits _{i =1}^{k}\left \Vert g_{i}\right \Vert \right)\,. \label{zk2_1}
	\end{align}
	
	Let us estimate each $g_{ . ,k}$ separately. For $g_{1 ,k }$ we have
	\begin{equation}
		g_{1 ,k} =\left (a_{1 ,k} -\overline{a}_{1 ,k}\right ) \left (A u_{k +1} +A u_{k -1}\right ) +\overline{a}_{1 ,k} \left (A z_{k +1} +A z_{k -1}\right ) . \label{g1_k}
	\end{equation}
	Using simple transformations for difference $a_{1 ,k} -\overline{a}_{1 ,k  }$ we get
	\begin{align}
		a_{1 ,k} -\overline{a}_{1 ,k} = \int \limits _{0}^{1}\int \limits _{\overline{l}_{k} (\xi )}^{l_{k} (\xi )}\psi _{1}^{ \prime } \left (\eta \right ) d \eta  d \xi  , \label{a1_k}
	\end{align}
	From (\ref{a1_k}) according to Remark \ref{remark2.2} we have
	\begin{align}\label{a1_k1}
		&\left \vert a_{1 ,k} -\overline{a}_{1 ,k}\right \vert \leq c \int \limits _{0}^{1}\left \vert \chi  (\xi )\right \vert  d \xi  \leq c \left (\left \vert \gamma _{k -1} -\overline{\gamma }_{k -1}\right \vert  +\left \vert \gamma _{k +1} -\overline{\gamma }_{k +1}\right \vert \right )\,,
	\end{align}
	where $c =\max \left \vert \psi _{1}^{ \prime } \left (\eta \right )\right \vert  < +\infty  ,\text{\quad }0 \leq \eta  \leq \max_{k}(\gamma _{k} ,\overline{\gamma }_{k}) < +\infty \,.$
	
	From (\ref{g1_k}) using (\ref{a1_k1}), we get
	\begin{align}
		\left \Vert g_{1 ,k}\right \Vert &\leq c \left (\left \vert \gamma _{k -1} -\overline{\gamma }_{k -1}\right \vert  +\left \vert \gamma _{k +1} -\overline{\gamma }_{k +1}\right \vert \right )\left (\left \Vert A u_{k +1}\right \Vert  +\left \Vert A u_{k -1}\right \Vert \right ) \nonumber \\
		&+\left \vert \overline{a}_{1 ,k}\right \vert  \left (\left \Vert A z_{k +1}\right \Vert  +\left \Vert A z_{k -1}\right \Vert \right ) . \label{ng1_k}
	\end{align}
	According to Remark \ref{remark2.2}, $\overline{a}_{1 ,k }$ is uniformly bounded. Indeed we have
	\begin{align}
		& \left \vert \overline{a}_{1 ,k}\right \vert  = \left \vert \int \limits _{0}^{1}\psi _{1} \left (\overline{l}_{k}\left( \xi \right) \right ) d \xi \right \vert  \leq c , \label{a_k}
	\end{align}
	where $c =\max \left \vert \psi _{1} \left (s\right )\right \vert  < +\infty  ,\text{\quad }0 \leq s \leq \max_{k}(\gamma _{k} ,\overline{\gamma }_{k})\text{.}$
	
	If we insert estimations (\ref{gamak}) and (\ref{a_k}) in (\ref{ng1_k}) and take into account, that $\left \Vert A u_{k}\right \Vert$ is uniformly bounded, we obtain
	\begin{equation}
		\left \Vert g_{1 ,k}\right \Vert  \leq c \left (\left \Vert A z_{k +1}\right \Vert  +\left \Vert A z_{k -1}\right \Vert \right ) . \label{g1_k1}
	\end{equation}
	
	Let us estimate vector $g_{2 ,k}$. We have
	\begin{equation}
		g_{2 ,k} =\left (d_{k} -\overline{d}_{k}\right ) \left (A u_{k +1} +A u_{k -1}\right ) +\overline{d}_{k} \left (A z_{k +1} +A z_{k -1}\right ) . \label{g2_k}
	\end{equation}
	According to Remark \ref{remark2.2}, for $\overline{d}_{k}$ the following estimation is valid
	\begin{align}
		&  \left \vert \overline{d}_{k}\right \vert  = \left\vert \delta\overline{\gamma}_{k} \right\vert \left \vert \int \limits _{0}^{1}\psi _{2}^{ \prime } \left (\overline{l}_{k}\left( \xi \right) \right ) d \xi \right \vert  \leq  c \left\vert \delta\overline{\gamma}_{k} \right\vert\,, \label{dk_1.1}
	\end{align}
	where $c =\max \left \vert \psi _{2}^{ \prime } \left (s\right )\right \vert  < +\infty  ,\text{\quad }0 \leq s \leq \max_{k}(\gamma _{k} ,\overline{\gamma }_{k})\text{.}$
	
	According to Remark \ref{remark2.3}, from (\ref{dk_1.1}) follows that $\overline{d}_{k}$ is uniformly bounded. Vector $\left \Vert A u_{k}\right \Vert $ is also uniformly bounded (see Remark \ref{remark2.2}). Using this and inequality (\ref{dk1}) from (\ref{g2_k}) follows
	\begin{equation}
		\left \vert g_{2 ,k}\right \vert  \leq c \left (\left \Vert A z_{k +1}\right \Vert  +\left \Vert A z_{k -1}\right \Vert  +\left\Vert \delta{z}_{k} \right\Vert\right ) . \label{g2_k1}
	\end{equation}
	Let us estimate the vector $g_{3 ,k}$. If in representation of $g_{1 ,k }$ operator $A$ is replaced by identity operator, then we get $g_{3 ,k}$ (we assume that $\psi _{1}$ respectivly is replaced by $\psi _{3}$). According to this for $g_{1 ,k}$ analogously as for $g_{3 ,k }$ the following estimation is true
	\begin{equation}
		\left \Vert g_{3 ,k}\right \Vert  \leq c \left (\left \Vert z_{k +1}\right \Vert  +\left \Vert z_{k -1}\right \Vert \right ) \leq c \left (\left \Vert A z_{k +1}\right \Vert  +\left \Vert A z_{k -1}\right \Vert \right ) . \label{g3_k}
	\end{equation}
	If we consider that operator $C$ satisfies condition (\ref{E1.3}), and $N$ is bounded, then for $g_{4 ,k }$ we get
	\begin{equation}
		\left \vert g_{4 ,k}\right \vert  \leq c \left (\left \Vert A z_{k}\right \Vert  +\left\Vert \delta{z}_{k} \right\Vert \right )\,. \label{g4_k}
	\end{equation}
	If we take into account that operator $M ( \cdot ) $ satisfies Lipschitz condition, then for $g_{5 ,k }$ we obtain
	\begin{equation}
		\left \Vert g_{5 ,k}\right \Vert  \leq c \left \Vert z_{k}\right \Vert  +\left \Vert f_{k} -\overline{f}_{k}\right \Vert  \leq c \left \Vert A z_{k}\right \Vert  +\left \Vert f_{k} -\overline{f}_{k}\right \Vert  . \label{g5_k}
	\end{equation}
	Finally, according to inequalities (\ref{g1_k1}), (\ref{g2_k1}), (\ref{g3_k}), (\ref{g4_k}) and (\ref{g5_k}) for the vector $g_{k}$ the following estimation is valid
	\begin{align}\label{eq:gf_k}
		& \left \Vert g_{k}\right \Vert \leq \left \Vert f_{k} -\overline{f}_{k}\right \Vert +c \left (\left \Vert A z_{k +1}\right \Vert  +\left \Vert A z_{k}\right \Vert  +\left \Vert A z_{k -1}\right \Vert  +\genfrac{\Vert }{\Vert }{}{}{ \Delta z_{k}}{\tau } +\genfrac{\Vert }{\Vert }{}{}{ \Delta z_{k -1}}{\tau }\right )\text{.}
	\end{align}
	
	Let us introduce the following denotations
	\begin{align*}
		& \delta _{k}  = \left \Vert B^{1/2} z_{0}\right \Vert  +\genfrac{\Vert }{\Vert }{}{}{ \Delta z_{0}}{\tau } +\tau  \left \Vert B^{1/2} \frac{ \Delta z_{0}}{\tau }\right \Vert  +\tau  \sum \limits _{i =1}^{k}\left \Vert f_{i} -\overline{f}_{i}\right \Vert\text{,} \\
		& \varepsilon _{k}  = \left \Vert B^{1/2} z_{k}\right \Vert  +\genfrac{\Vert }{\Vert }{}{}{ \Delta z_{k -1}}{\tau }\text{.}
	\end{align*}
	From \eqref{eq:gf_k} according to (\ref{E1.6}) we have
	\begin{align}
		& \left \Vert g_{k}\right \Vert \leq \left \Vert f_{k} -\overline{f}_{k}\right \Vert +c \left( {\varepsilon}_{k+1} + {\varepsilon}_{k} + \left\Vert B^{1/2} z_{k -1}\right\Vert \right) \; . \label{g_k}
	\end{align}
	Using this inequality, from \eqref{zk2} we obtain
	\begin{align}
		&\left \Vert B^{1/2} z_{k +1}\right \Vert  \leq c\delta_{k} +c \tau  \sum \limits _{i =1}^{k}\left (\varepsilon _{i+1}  + \varepsilon _{i} +\left \Vert B^{1/2} z_{i -1}\right \Vert\right )\,.\label{eq_zk}
	\end{align}
	Using simple transformation from (\ref{eq_zk}) we get
	\begin{align}
		&\left \Vert B^{1/2} z_{k +1}\right \Vert  \leq c\delta_{k} +c \tau  \sum \limits _{i =1}^{k+1}\varepsilon _{i}\,.\label{eq1_zk}
	\end{align}
	Analogously, according to (\ref{g_k}) and $\left\Vert {z}_{0} \right\Vert \leq \left\Vert {B}^{1/2}{z}_{0} \right\Vert$, from (\ref{zk2_1}) follows
	\begin{align}
		& \genfrac{\Vert }{\Vert }{}{}{ \Delta z_{k}}{\tau } \leq c{\delta}_{k} +c \tau  \sum \limits _{i =1}^{k +1}\varepsilon _{i}\,. \label{eq1_zk1}
	\end{align}
	If we add inequalities (\ref{eq1_zk}) and (\ref{eq1_zk1}), we get
	\begin{align}
		& \varepsilon_{k+1} \leq c \delta_{k} +c \tau  \sum \limits _{i =1}^{k +1}\varepsilon_{i}\,.\label{eq1_zk2}
	\end{align}
	If we request that $\tau$ satisfies the condition $\tau  \leq q/c$ ($0 <q <1$), then from (\ref{eq1_zk}) we get
	\begin{align*}
		& \varepsilon_{k+1} \leq c \delta_{k} +c \tau  \sum \limits _{i =1}^{k}\varepsilon_{i}\,.
	\end{align*}
	
	\noindent From here by the induction can be obtained (discrete analogue of Gr\"{o}nwall's lemma)
	\begin{equation}
		\varepsilon _{k +1} \leq c\left (1 +c \tau \right )^{k -1} \left (\delta _{k} +\tau  \varepsilon _{1}\right )\,. \label{eq1_G}
	\end{equation}
	From \eqref{eq1_G}, using inequality \begin{equation*}\left \Vert B^{1/2} z_{1}\right \Vert  \leq \left \Vert B^{1/2} z_{0}\right \Vert  +\tau  \left \Vert B^{1/2} \frac{ \Delta z_{0}}{\tau }\right \Vert \text{,}
	\end{equation*}
	follows estimation (\ref{T2_2}).
\end{proof}

\section{Estimate of the error of approximate solution}\label{sec:section6}
In this section, using the results of the previous sections, we prove the theorem, which considers the error estimate for the approximate solution. This theorem represents an almost trivial result of the theorem proved in the previous section. However, estimation of the approximation error for scheme (\ref{eq1}) because of nonlinear terms, requires additional calculations.

Before formulating the theorem regarding the convergence (error estimate of the approximate solution) of the scheme (\ref{eq1}), we would like to make a note about the well-posedness of the problem (\ref{E1.1}), (\ref{E1.2}). We mean from the beginning that the original continuous problem is well-posed and the solution is sufficiently smooth. Obviously, we require the smoothness of the solution to find a convergence order (rate). If we demand the minimal smoothness which is necessary for the well-posedness of the problem, then the convergence is guaranteed, but we are not able to establish an order. If we increase the smoothness order by one unit, then the convergence rate is equal to one (in this, as well as in the previous case, it is sufficient to take $u_{1} =\varphi _{0} +\tau  \varphi _{1}$). However, a convergence rate becomes two if we level up smoothness by two and define starting vector $u_{1}$ using the following formula
\begin{equation}u_{1} =\varphi _{0} +\tau  \varphi _{1} +\frac{\tau ^{2}}{2} \varphi _{2} , \label{eq1_1}
\end{equation}
where ${\varphi}_{2} = {u}^{{\prime}{\prime}}\left( 0 \right)$, ${u}^{{\prime}{\prime}}\left( 0 \right)$ is defined from the equation \eqref{E1.1} via $\varphi_{0}$ and $\varphi_{1}$ (we assume that $\varphi _{0}\,, \varphi _{1} \in D (B)$).

The further increase of smoothness of the solution does not make sense, as the approximation order of the scheme (\ref{eq1}) is not more than two (obviously, the convergence order generally does not exceed the approximation order).

Let us formulate above stated as a theorem (below everywhere
$c$ denotes positive constant).

\begin{theorem}
	\label{theorem6.1}
	Let the problem (\ref{E1.1}),
	(\ref{E1.2}) be well-posed. Besides, the following conditions are fulfilled:
	\begin{enumerate}[label=(\alph*)]
		\item\label{theorem6.1_a} $\varphi _{0}\; ,\varphi _{1}\; ,\varphi _{2} \in D (B)$;
		\item\label{theorem6.1_b} solution $u (t)$ of problem (\ref{E1.1}), (\ref{E1.2}) is continuously differentiable
		to third order including and $u^{ \prime  \prime  \prime } \left (t\right )$ satisfies Lipschitz condition;
		\item\label{theorem6.1_c} $u^{ \prime  \prime } \left (t\right ) \in D (B)$ for every $t$ from $\left [0 ,\overline{t}\right ]$ and function $B u^{ \prime  \prime } \left (t\right )$ satisfy Lipschitz condition.
	\end{enumerate}
	Then for scheme (\ref{eq1}), (\ref{eq1_1})
	the following estimates are true
	\begin{equation}
		\max_{1 \leq k \leq n -1}\left (\left \Vert B^{1/2} \widetilde{z}_{k +1}\right \Vert  +\genfrac{\Vert }{\Vert }{}{}{ \Delta \widetilde{z}_{k}}{\tau }\right ) \leq c \tau ^{2} , \label{Th}
	\end{equation}
	where $\widetilde{z}_{k} =u (t_{k}) -u_{k}$ is an error of approximate solution, $ \Delta \widetilde{z}_{k} =\widetilde{z}_{k +1} -\widetilde{z}_{k}$ .
\end{theorem}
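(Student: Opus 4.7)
The plan is to reduce the error estimate (\ref{Th}) to the perturbation bound (\ref{T2_2}) of Theorem \ref{theorem5.1}. Setting $\overline{u}_k := u(t_k)$, substitution of the exact solution into the scheme (\ref{eq1}) and use of the continuous equation (\ref{E1.1}) at $t_k$ shows that $\overline{u}_k$ satisfies (\ref{eq1}) with modified right-hand side $\overline{f}_k := f_k + r_k$, where $r_k$ is the local truncation error. Under assumptions \ref{theorem6.1_a}--\ref{theorem6.1_c}, the sampled exact solution enjoys the same uniform bounds used in the proof of Theorem \ref{theorem5.1}: $\|(\overline{u}_{k+1}-\overline{u}_k)/\tau\|$, $\|A\overline{u}_k\|$, $\|B^{1/2}\overline{u}_k\|$, and $|(\overline{\gamma}_{k+1}-\overline{\gamma}_{k-1})/(2\tau)|$ are all bounded by continuity of the corresponding functions of $t$. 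Applying (\ref{T2_2}) to the pair $(u_k,\overline{u}_k)$ with $\widetilde{z}_k = \overline{u}_k - u_k$ yields
\begin{equation*}
\|B^{1/2}\widetilde{z}_{k+1}\| + \genfrac{\Vert}{\Vert}{}{}{\Delta\widetilde{z}_k}{\tau} \leq c\Big(\|B^{1/2}\widetilde{z}_0\| + \genfrac{\Vert}{\Vert}{}{}{\Delta\widetilde{z}_0}{\tau} + \tau\Big\|B^{1/2}\tfrac{\Delta\widetilde{z}_0}{\tau}\Big\| + \tau\sum_{i=1}^{k}\|r_i\|\Big),
\end{equation*}
so the proof reduces to showing that each term on the right is $O(\tau^2)$.

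The initial errors are handled by the choice of starting vectors. Since $u_0 = \varphi_0 = u(0)$, we have $\widetilde{z}_0 = 0$; the prescription (\ref{eq1_1}) matches the first three Taylor coefficients of $u(t)$ at $t=0$, so under \ref{theorem6.1_b} one has $u(t_1) - u_1 = (\tau^3/6)u'''(\theta)$ for some $\theta \in (0,\tau)$, hence $\|\Delta\widetilde{z}_0/\tau\| = O(\tau^2)$. The analogous expansion in the $B^{1/2}$-norm, justified by \ref{theorem6.1_a} (so that $\varphi_0,\varphi_1,\varphi_2 \in D(B)$) together with the regularity implied by \ref{theorem6.1_c}, gives $\tau\|B^{1/2}\Delta\widetilde{z}_0/\tau\| = O(\tau^2)$.

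The bulk of the work is showing $\|r_k\| \leq c\tau^2$ uniformly in $k$. The linear symmetric differences approximating $u''(t_k)$, $Bu'(t_k)$, $Bu(t_k)$ and $Au(t_k)$ in (\ref{eq1}) are standard second-order formulas, controlled by \ref{theorem6.1_b}--\ref{theorem6.1_c} together with (\ref{E1.6}), while the $Cu_k$, $N\delta u_k$ and $M(u_k)$ contributions are $O(\tau^2)$ via (\ref{E1.3}), boundedness of $N$, and the Lipschitz property of $M$ applied to a $C^1$ solution. The genuinely new items are the two nonlinear approximations. Writing $\gamma(t) := \|A^{1/2}u(t)\|^2$, a Taylor expansion of $\psi_1$ about $(\gamma_{k-1}+\gamma_{k+1})/2$ inside the integral (\ref{E1.5}) gives
\begin{equation*}
\widetilde{\psi}_1(\gamma_{k-1},\gamma_{k+1}) = \psi_1\Big(\tfrac{\gamma_{k+1}+\gamma_{k-1}}{2}\Big) + O\big((\gamma_{k+1}-\gamma_{k-1})^2\big) = \psi_1(\gamma(t_k)) + O(\tau^2),
\end{equation*}
and for the discrete time derivative, since $\Psi(t) := \psi_2(\gamma(t))$ is $C^3$,
\begin{equation*}
d_k = \frac{\Psi(t_{k+1}) - \Psi(t_{k-1})}{2\tau} = \Psi'(t_k) + O(\tau^2).
\end{equation*}
Summing gives $\tau\sum_{i=1}^k\|r_i\| \leq c\,\overline{t}\,\tau^2$, which combined with the initial-error bounds yields (\ref{Th}).

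The main obstacle is the bookkeeping of smoothness required to produce these two nonlinear $O(\tau^2)$ consistency bounds using only the third-order regularity of $u$ granted by \ref{theorem6.1_b}--\ref{theorem6.1_c} and the $C^2$-hypothesis on $\psi_1,\psi_2,\psi_3$ from Section \ref{sec:section1}; once that is in place, Theorem \ref{theorem5.1} transports the local truncation bound into the global error estimate with no further work.
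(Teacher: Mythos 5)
Your proposal follows essentially the same route as the paper: substitute the exact solution into the scheme to identify a truncation residual $r_k$, invoke Theorem \ref{theorem5.1} with $\overline{f}_k = f_k + r_k$, show $\|r_k\| \le c\tau^2$ (including the integral-mean and discrete-derivative nonlinear terms) and $O(\tau^2)$ initial errors from \eqref{eq1_1}. The only difference is that you spell out the midpoint-rule and symmetric-difference Taylor arguments that the paper dismisses as ``routine calculations,'' which is a welcome addition rather than a deviation.
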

\begin{proof}
	Let us introduce the following notations:
	\begin{equation*}
		{\delta}{u}\left( {t}_{k} \right)=\frac{u\left( {t}_{k+1} \right)-u\left( {t}_{k-1} \right)}{2\tau}\,,\quad \hat{u}\left( {t}_{k} \right)=\frac{u\left( {t}_{k+1} \right)+u\left( {t}_{k-1} \right)}{2}\,.
	\end{equation*}
	Let us write down the equation (\ref{E1.1}) at point $t =t_{k}$ in the following form
	\begin{align}
		& \frac{ \Delta ^{2}u \left (t_{k -1}\right )}{\tau ^{2}} +a_{1} B {\delta}{u}\left( {t}_{k} \right) +a_{2} B \hat{u}\left( {t}_{k} \right) +\widetilde{\psi }_{1} \left (\zeta _{k -1} ,\zeta _{k +1}\right )A\hat{u}\left( {t}_{k} \right)  \nonumber  \\
		&  +{\delta}\psi_{2}\left( \zeta_{k} \right) A\hat{u}\left( {t}_{k} \right) +\widetilde{\psi }_{3} \left (\left \Vert u \left (t_{k -1}\right )\right \Vert ^{2} ,\left \Vert u \left (t_{k +1}\right )\right \Vert ^{2}\right )\hat{u}\left( {t}_{k} \right) \nonumber  \\
		&  +C u \left (t_{k}\right ) +N {\delta}{u}\left( {t}_{k} \right) +M \left (u \left (t_{k}\right )\right ) =f \left (t_{k}\right ) +r_{\tau } \left (t_{k}\right ) , \label{eq14}
	\end{align}
	where $\zeta _{k} =\left \Vert A^{1/2} u \left (t_{k}\right )\right \Vert ^{2}$ ($\zeta  (t) =\left \Vert A^{1/2} u \left (t\right )\right \Vert ^{2}$),
	\begin{equation}
		r_{\tau } \left (t_{k}\right ) =\sum _{j =0}^{5}r_{j ,\tau } \left (t_{k}\right )\; , \label{rtau_B1}
	\end{equation}
	and where
	\begin{align*}
		r_{0 ,\tau } \left (t_{k}\right ) &= \frac{ \Delta ^{2}u \left (t_{k -1}\right )}{\tau ^{2}} -u^{ \prime  \prime } \left (t_{k}\right )\,,\quad r_{6 ,\tau } \left (t_{k}\right )  = N \left ({\delta}{u}\left( {t}_{k} \right) -u^{ \prime } \left (t_{k}\right )\right )\,, \\
		r_{2 ,\tau } \left (t_{k}\right ) &= \frac{1}{2} \widetilde{\psi }_{1} \left (\zeta _{k -1} ,\zeta _{k +1}\right )A \left ( \Delta ^{2}u \left (t_{k -1}\right )\right ) +\left (\widetilde{\psi }_{1} \left (\zeta _{k -1} ,\zeta _{k +1}\right ) -\psi _{1} \left (\zeta _{k}\right )\right ) A u \left (t_{k}\right )\,, \\
		r_{3 ,\tau } \left (t_{k}\right )  &= a_{1} B \left ({\delta}{u}\left( {t}_{k} \right) -u^{ \prime } \left (t_{k}\right )\right )\,,\quad r_{1 ,\tau } \left (t_{k}\right ) = \frac{1}{2} a_{2} B \left ( \Delta ^{2}u \left (t_{k -1}\right )\right )\,, \\
		r_{4 ,\tau } \left (t_{k}\right ) &= {\delta}\psi_{2}\left( \zeta_{k} \right) \frac{1}{2} A \left ( \Delta ^{2}u \left (t_{k -1}\right )\right ) +\left ({\delta}\psi_{2}\left( \zeta_{k} \right) -{\left( \psi _{2} (\zeta  \left( t_{k} \right) \right)}^{\prime}_{t}\right ) A u \left (t_{k}\right )\,, \\
		r_{5 ,\tau } \left (t_{k}\right ) &= \widetilde{\psi }_{3} \left (\left \Vert u \left (t_{k -1}\right )\right \Vert ^{2} ,\left \Vert u \left (t_{k +1}\right )\right \Vert ^{2}\right ) \Delta ^{2}u \left (t_{k -1}\right ) \\
		&+\left (\widetilde{\psi }_{3} \left (\left \Vert u \left (t_{k -1}\right )\right \Vert ^{2} ,\left \Vert u \left (t_{k +1}\right )\right \Vert ^{2}\right )\right. \left.-\psi _{3} \left (\left \Vert u \left (t_{k}\right )\right \Vert ^{2}\right )\right )u \left (t_{k}\right )\,.
	\end{align*}
	From (\ref{eq14}) and (\ref{eq1})
	according to Theorem \ref{theorem5.1} we obtain
	\begin{align}
		& \left \Vert B^{1/2} \widetilde{z}_{k +1}\right \Vert  +\genfrac{\Vert }{\Vert }{}{}{ \Delta \widetilde{z}_{k}}{\tau } \nonumber \\
		&\leq c \left (\left \Vert B^{1/2} \widetilde{z}_{0}\right \Vert  +\genfrac{\Vert }{\Vert }{}{}{ \Delta \widetilde{z}_{0}}{\tau } +\tau  \left \Vert B^{1/2} \frac{ \Delta \widetilde{z}_{0}}{\tau }\right \Vert  +\tau  \sum \limits _{i =1}^{k}\left \Vert r_{\tau } \left (t_{k}\right )\right \Vert \right ) . \label{Azk}
	\end{align}
	If we carry out the routine calculations we obtain
	\begin{equation}
		\left \Vert r_{\tau } \left (t_{k}\right )\right \Vert  \leq c \tau ^{2} . \label{eq_R}
	\end{equation}
	It is obvious that according to the conditions \hyperref[theorem6.1_a]{(a)}, \hyperref[theorem6.1_b]{(b)} and \hyperref[theorem6.1_c]{(c)} of the \hyperref[theorem6.1]{Theorem \ref*{theorem6.1}}, and the equality \eqref{eq1_1} the following inequalities are true
	\begin{align}
		& \left \Vert B^{1/2} \left ( \Delta \widetilde{z}_{0}\right )\right \Vert + \genfrac{\Vert }{\Vert }{}{}{ \Delta \widetilde{z}_{0}}{\tau }  \leq c \tau ^{2}\,, \label{eq26}
	\end{align}
	From (\ref{Azk}), taking into account (\ref{eq_R}) and (\ref{eq26}), follows (\ref{Th}).
\end{proof}

\section{Iterative method for discrete problem}\label{sec:section7}
Let us rewrite equation (\ref{eq1}) in the following form
\begin{equation}
	T_{k} v_{k +1} =\frac{1}{2} \tau  \psi _{2} (\gamma _{k -1}) A v_{k +1} -\tau  N v_{k +1} +\widetilde{f}_{k}\; , \label{i1.1}
\end{equation}
where $v_{k +1} =(u_{k +1} +u_{k -1})/2$,
\begin{align*}
	& T_{k}  =  \left (2 +\tau ^{2} a_{3 ,k}\right ) I +\tau  \left (a_{1} +\tau  a_{2}\right ) B + \tau  {b}_{k} A\,,\quad {b}_{k}=\tau  a_{1 ,k} +\frac{1}{2} \psi _{2} \left( \gamma _{k +1} \right)\,, \\
	& \widetilde{f}_{k} = \tau  \widetilde{g}_{k} +\tau  a_{1} B u_{k -1} +2 u_{k}\,,\quad\widetilde{g}_{k}  = \tau  f_{k} -\tau  M \left (u_{k}\right ) +N u_{k -1} -\tau  C u_{k}\;\text{.}
\end{align*}
Equation (\ref{i1.1}) is solved by using the following iteration
\begin{equation}
	T_{k ,m} v_{k +1}^{(m +1)} =\frac{1}{2} \tau  \psi _{2} (\gamma _{k -1}) A v_{k +1}^{(m)} -\tau  N v_{k +1}^{(m)} +\widetilde{f}_{k}\; , \label{i1.3}
\end{equation}
where $m =0, 1, \ldots$,
\begin{align*}
	& T_{k ,m}  = \left (2 +\tau ^{2} a_{3 ,k}^{(m)}\right ) I +\tau  \left (a_{1} +\tau  a_{2}\right ) B +\tau {b}_{k}^{\left( m \right)} A\,,\quad {b}_{k}^{\left( m \right)}=\tau  a_{1 ,k}^{\left( m \right)} +\frac{1}{2} \psi _{2} \left( \gamma _{k +1}^{\left( m \right)} \right)\,,\\
	& a_{1 ,k}^{(m)} = \widetilde{\psi }_{1} \left (\gamma _{k -1} ,\gamma _{k +1}^{(m)}\right )\,,\quad\gamma _{k +1}^{(m)}  = \left \Vert A^{1/2} u_{k +1}^{(m)}\right \Vert ^{2} ,\text{\quad }u_{k +1}^{(m)} =2 v_{k +1}^{(m)} -u_{k -1}\;\text{,} \\
	& a_{3 ,k}^{(m)} = \widetilde{\psi }_{3} \left (\left \Vert u_{k -1}\right \Vert ^{2} ,\left \Vert u_{k +1}^{(m)}\right \Vert ^{2}\right ) ,\text{\quad }v_{k +1}^{(0)} =(u_{k} +u_{k -1})/2\;\text{.}
\end{align*}
\begin{remark}\label{remark7.1}
	We already required that $A$ and $B$ to be self-adjoint positively defined operators. Besides condition (\ref{E1.2.1}) is valid. From here follows that $D (A) \subset D (B^{1/2})$ (see  Remark \ref{remark1.1}). In order to show convergence of iterative method (\ref{i1.3}) we require the following condition to be fulfilled $D (A) =D (B^{1/2})$. From here follows that operator $B^{1/2} A^{ -1}$ (as it is closed operator defined in whole Hilbert space $H$) is bounded according to the closed graph theorem. Let us denote norm of this operator by $c_{1} =\left \Vert B^{1/2} A^{ -1}\right \Vert $.
\end{remark}
\begin{remark}\label{remark7.2}
	Condition $D (A) =D (B^{1/2})$ is automatically fulfilled for equation (\ref{B1}), if unknown function and its derivatives satisfy homogeneous boundary conditions. In this case $B^{1/2} =A$, where $A$ is an expansion of symmetrical operator ($-\partial _{x x}^{2}$) (with homogeneous boundary conditions) till a self-adjoint operator.
\end{remark}
\begin{remark}\label{remark7.3}
	Regarding iteration (\ref{i1.3}) it is important that $T_{k ,m}$ is self-adjoint positively defined operator ($a_{3 ,k}^{(m)}$, $a_{1 ,k}^{(m)}$and $\psi _{2} (\gamma _{k +1}^{(m)}) $ are nonnegative, as according to the condition $\psi _{1} (s)$, $\;\psi _{2} (s)$ and $\psi _{3} (s)$, $s \in [0 , +\infty [$, are nonnegative functions). It is well-known that from here follows that $R (T_{k ,m}) =H$, i.e. equation $T_{k ,m} u =g$ , for any vector $g$ from $H$ has unique solution  $u \in D (B)  $ and it continuously depends on  $g$.
\end{remark}
Let us prove convergence of iteration (\ref{i1.3}). The prove depends on many standard transformations. The first step is to prove uniform boundedness of vectors ${w}_{k+1}^{\left( m \right)}={B}^{1/2}{v}_{k+1}^{\left( m \right)}$ obtained by iteration (\ref{i1.3}). This fact is important, as it gives a certain opportunity that iteration (\ref{i1.3}) might be converged.

\begin{enumerate}[align=left,label=\textbf{Step~{\arabic*}.}]
	\item\label{st:step_one}
	Prove uniform boundedness of vector sequence ${w}_{k+1}^{(m)}$ obtained by using iteration (\ref{i1.3}).
	\begin{proof}
		Let us introduce the following notations:
		\begin{align*}
			&S_{k ,m} = \left (2 +\tau ^{2} a_{3 ,k}^{(m)}\right ) I +{a}_{\tau} B\,,\quad S_{k} = \left (2 +\tau ^{2} a_{3 ,k}\right ) I +{a}_{\tau} B\,,\quad {a}_{\tau}=\tau  \left (a_{1} +\tau  a_{2}\right )\,,\\
			&{P}_{k,m}={B}^{1/2}{T}^{-1}_{k,m}\,,\quad{Q}_{k,m}={B}^{1/2}{S}^{-1/2}_{k,m}\,,\quad{P}_{k}={B}^{1/2}{T}^{-1}_{k}\,,\quad
			{Q}_{k}={B}^{1/2}{S}^{-1/2}_{k}\,.
		\end{align*}
		If we define the vector $v^{\left( m+1 \right)}_{k+1}$ from the equation \eqref{i1.3}, after applying the operator $B^{1/2}$ and move on to the norm, we get
		\begin{align}
			\left \Vert {w}_{k +1}^{(m +1)}\right \Vert  &\leq \frac{1}{2} \tau  \psi _{2} (\gamma _{k -1}) \left \Vert {P}_{k,m}\right \Vert  \left \Vert A B^{ -1/2}\right \Vert  \left \Vert {w}_{k +1}^{(m)}\right \Vert  \nonumber  \\
			& +\tau  \left \Vert {P}_{k,m}\right \Vert  \left \Vert N\right \Vert  \left \Vert B^{ -1/2}\right \Vert  \left \Vert {w}_{k +1}^{(m)}\right \Vert +\left \Vert {P}_{k,m} \widetilde{f}_{k}\right \Vert  . \label{i2.9}
		\end{align}
		Operator $T_{k ,m}$ should be written in the following form
		\begin{equation}
			T_{k ,m} =S_{k ,m}^{1/2} G_{k ,m} S_{k ,m}^{1/2}\,,\quad G_{k ,m} = I +\tau  b_{k}^{(m)} S_{k ,m}^{ -1/2} A S_{k ,m}^{ -1/2}\,. \label{i3.5}
		\end{equation}
		According to (\ref{i3.5}) we have
		\begin{equation}
			\left \Vert {P}_{k,m}\right \Vert  \leq \left \Vert {Q}_{k,m}\right \Vert  \left \Vert G_{k ,m}^{ -1}\right \Vert  \left \Vert S_{k ,m}^{ -1/2}\right \Vert  . \label{i2.10}
		\end{equation}
		The following estimations can be obtained easily:
		\begin{equation}
			\left \Vert {Q}_{k,m}\right \Vert  \leq \frac{1}{\sqrt{\tau  a_{1}}}\,,\quad \left \Vert S_{k ,m}^{ -1/2}\right \Vert  \leq \frac{1}{\sqrt{2}}\; ,\text{\quad }\left \Vert G_{k ,m}^{ -1}\right \Vert  \leq 1\,. \label{i2.15}
		\end{equation}
		If we insert inequalities (\ref{i2.15}) into (\ref{i2.10}), we get
		\begin{equation}
			\left \Vert {P}_{k,m}\right \Vert = \left\Vert {B}^{1/2}{T}^{-1}_{k,m} \right\Vert  \leq \frac{1}{\sqrt{2 \tau  a_{1}}}\; . \label{i2.28}
		\end{equation}
		As $\gamma _{k}$ is uniformly bounded (see Remark \ref{remark2.2}), we have
		\begin{equation}\label{eq:psi_2gamma_k}
			\psi _{2} (\gamma _{k}) \leq M_{2} ,\;M_{2} =\max \psi _{2} (s) < +\infty  ,\;0 \leq s \leq \max_{k}\gamma _{k}\; < +\infty\,.
		\end{equation}
		From (\ref{i2.9}) according to estimations (\ref{i2.28}), (\ref{eq:psi_2gamma_k}), $\left\Vert {A}{B}^{-1/2} \right\Vert \leq {b}_{0}$ (see \eqref{E1.6}) and $\left\Vert {B}^{-1/2} \right\Vert \leq 1/\sqrt{m_{B}}$ ($m_{B}$ is a lower bound of the operator $B$) follows
		\begin{equation}
			\left \Vert {w}_{k +1}^{(m +1)}\right \Vert  \leq \sqrt{\tau } M_{3} \left \Vert {w}_{k +1}^{(m)}\right \Vert  +\left \Vert {P}_{k,m} \widetilde{f}_{k}\right \Vert  , \label{i2.29}
		\end{equation}
		where $M_{3}$ is a positive constant (independent of $m$ and $n$).
		
		Let us estimate norm of the vector
		\begin{align}
			& {P}_{k,m} \widetilde{f}_{k}  = \tau  {P}_{k,m} \widetilde{g}_{k} +\tau  a_{1} {P}_{k,m} B u_{k -1} +2 {P}_{k,m} u_{k}\; . \label{i2.29.1}
		\end{align}
		According to Theorem \ref{theorem2.1}, $A u_{k}$, $M \left (u_{k}\right )  $ and $u_{k}$ vectors are uniformly bounded,  (uniform boundeness of vectors $M \left (u_{k}\right )$ follows from (\ref{E2.4})), also according to  \eqref{E1.3} we have $\left \Vert C u_{k}\right \Vert  \leq a_{0} \left \Vert A u_{k}\right \Vert  $. From here follows that vectors $\widetilde{g}_{k}$ are uniformly bounded. According to this from (\ref{i2.28}) follows that there exists such constant $M_{0}$ (independent of $n$ and $m$), that
		\begin{equation}
			\sqrt{\tau } \left \Vert {P}_{k,m} \widetilde{g}_{k}\right \Vert  \leq \frac{1}{\sqrt{2 a_{1}}} \left \Vert \widetilde{g}_{k}\right \Vert  \leq M_{0}\; . \label{i2.29.5}
		\end{equation}
		Let us estimate second summand on the right-hand side of equality (\ref{i2.29.1}). Using formula (\ref{i3.5}) and inequalities (\ref{i2.15}) we get the following estimation
		\begin{align}
			&\tau  \left \Vert {P}_{k,m} B u_{k -1}\right \Vert = \tau  \left \Vert {Q}_{k,m} G_{k ,m}^{ -1} {Q}_{k,m} B^{1/2} u_{k -1}\right \Vert \leq \frac{1}{a_{1}} \left \Vert B^{1/2} u_{k -1}\right \Vert  . \label{i2.29.51}
		\end{align}
		From here according to Theorem \ref{theorem2.1} follows
		\begin{equation}
			\tau  \left \Vert {P}_{k,m} B u_{k -1}\right \Vert  \leq M_{1}\; , \label{i2.29.7}
		\end{equation}
		where $M_{1}$ is positive constant (independent of $n$ and $m$).
		
		Eventually, we need to estimate third summand in equality (\ref{i2.29.1}). Let us rewrite it in the following form
		\begin{equation}
			{P}_{k,m} u_{k} =B^{1/2} \left (T_{k ,m}^{ -1} -C_{k ,m}^{ -1}\right ) u_{k} +B^{1/2} C_{k ,m}^{ -1} u_{k}\; , \label{i3.5.1}
		\end{equation}
		where $C_{k ,m} =\left (2 +\tau ^{2} a_{3 ,k}^{(m)}\right ) I +\tau {b}_{k}^{\left( m \right)} A\;\text{.}$
		
		Obviously, we have
		\begin{align}
			& T_{k ,m}^{ -1} -C_{k ,m}^{ -1} = T_{k ,m}^{ -1} \left (C_{k ,m} -T_{k ,m}\right ) C_{k ,m}^{ -1} = -{a}_{\tau} T_{k ,m}^{ -1} B C_{k ,m}^{ -1}\; . \label{i3.5.3}
		\end{align}
		If we insert (\ref{i3.5.3}) into (\ref{i3.5.1}) and consider (\ref{i3.5}), we get
		\begin{align}
			{P}_{k,m} u_{k} &= \left (B^{1/2} A^{ -1}\right ) C_{k ,m}^{ -1} \left (A u_{k}\right ) \nonumber  \\
			& -{a}_{\tau} {Q}_{k,m} G_{k ,m}^{ -1} {Q}_{k,m} \left (B^{1/2} A^{ -1}\right ) C_{k ,m}^{ -1} \left (A u_{k}\right ) . \label{i3.7}
		\end{align}
		From (\ref{i3.7}) according to (\ref{i2.15}), $\left \Vert C_{k ,m}^{ -1}\right \Vert  \leq 1/2$, Remark \ref{remark7.1} and also \hyperref[theorem2.1]{Theorem \ref*{theorem2.1} } we have
		\begin{equation}
			\left \Vert {P}_{k,m} u_{k}\right \Vert  \leq M_{4} \left \Vert A u_{k}\right \Vert \leq {M}_{5}\,. \label{i2.31}
		\end{equation}
		where $M_{5}$ is  positive constant (independent of $n$ and $m$), $M_{4} =c_{1} \left (2 +\tau  a_{2}/a_{1}\right )$.
		
		Let us insert inequalities (\ref{i2.29.5}), (\ref{i2.29.7}) and (\ref{i2.31}) into (\ref{i2.29.1}), we get
		\begin{equation}
			\left \Vert {P}_{k,m} \widetilde{f}_{k}\right \Vert  \leq M_{6}\,,\quad M_{6} =\sqrt{\tau } M_{0} +a_{1} M_{1} +2 M_{5}\,. \label{i2.31.3}
		\end{equation}
		From (\ref{i2.29}) according to (\ref{i2.31.3}) we have
		\begin{equation}
			\left \Vert {w}_{k +1}^{(m +1)}\right \Vert  \leq \sqrt{\tau } M_{3} \left \Vert {w}_{k +1}^{(m)}\right \Vert  +M_{6}\; . \label{i2.35}
		\end{equation}
		Let $\tau $ satisfies condition $\sqrt{\tau } M_{3} \leq q <1$. Then from (\ref{i2.35}) follows
		\begin{equation}
			\left \Vert {w}_{k +1}^{(m)}\right \Vert  \leq q^{m} \left \Vert {w}_{k +1}^{(0)}\right \Vert  +\frac{M_{6}}{1 -q}\; . \label{i2.39}
		\end{equation}
		
		Inequality (\ref{i2.39}) shows, that vector sequence ${w}_{k +1}^{\left( m \right)}={B}^{1/2}{v}^{\left( m \right)}_{k+1}$ ($v_{k +1}^{(0)} =(u_{k +1}^{(0)} +u_{k -1})/2$, $u_{k +1}^{(0)} =u_{k}$) obtained by iterative method is uniformly bounded, i.e. there exists such constant $M_{7}$ (independent of $n$ and $m$) , that $\left \Vert {w}_{k +1}^{(m)}\right \Vert  \leq M_{7}$ (here we again used Theorem \ref{theorem2.1} for vector ${w}_{k +1}^{(0) }$).
	\end{proof}
    \item\label{st:step_two}
    Let us estimate norm of error $Z_{k +1}^{(m)} =B^{1/2} \left (v_{k +1} -v_{k +1}^{(m +1)}\right )$. Let us define $v_{k +1}$ and $v_{k +1}^{(m +1)  }$ from equalities (\ref{i1.1}) and (\ref{i1.3}), respectively. If we apply  $B^{1/2}$ to the difference of these vectors and perform corresponding transformation, we get
    \begin{align}
    	Z_{k +1}^{(m +1)} &= \frac{1}{2} \tau  \psi _{2} (\gamma _{k -1}) B^{1/2} L_{k ,m} {L} {w}_{k +1} +\frac{1}{2} \tau  \psi _{2} (\gamma _{k -1}) {P}_{k,m} {L} Z_{k +1}^{(m)} \nonumber\\
    	&-\tau  B^{1/2} L_{k ,m} {S} {w}_{k +1} -\tau  {P}_{k,m} {S} Z_{k +1}^{(m)} +B^{1/2} L_{k ,m} \widetilde{f}_{k}\; , \label{i2.43}
    \end{align}
    where ${L}={A}{B}^{-1/2}$, ${S}={N}{B}^{-1/2}$ and $L_{k ,m} =T_{k}^{ -1} -T_{k ,m}^{ -1}$.
    
    Let us note, that in representation of (\ref{i2.43}) the most complicated term is $B^{1/2} L_{k ,m} \widetilde{f}_{k}$, as it does not have a small parameter $\tau $ as a multiplier, that provides to obtain an estimation from where follows convergence of the iteration method (\ref{i1.3}). For this term, using simple transformation we get
    \begin{align}
    	B^{1/2} L_{k ,m} \widetilde{f}_{k} &= -{P}_{k} \left (T_{k} -T_{k ,m}\right ) T_{k ,m}^{ -1} \widetilde{f}_{k}= -\tau ^{2} {\zeta}_{3,k}^{\left( m \right)} {P}_{k} T_{k ,m}^{ -1} \widetilde{f}_{k} \nonumber  \\
    	& -\tau ^{2} {\zeta}_{1,k}^{\left( m \right)} {P}_{k} A T_{k ,m}^{ -1} \widetilde{f}_{k} -\frac{1}{2} \tau  {\zeta}_{2,k}^{\left( m \right)} {P}_{k} A T_{k ,m}^{ -1} \widetilde{f}_{k}\;, \label{i2.45}
    \end{align}
    where ${\zeta}_{2,k}^{\left( m \right)}=\psi _{2} (\gamma _{k +1}) -\psi _{2} (\gamma _{k +1}^{(m)})\,\text{,}\quad{\zeta}_{j,k}^{\left( m \right)}=a_{j ,k} -a_{j ,k}^{(m)}\,\text{,}\quad {j}=1,3$.
    
    Analogously to (\ref{i3.5}) we can represent operator $T_{k}$ in the following form:
    \begin{equation}
    	T_{k} =S_{k}^{1/2} G_{k} S_{k}^{1/2}\,,\quad {G}_{k} = I +\tau  b_{k} S_{k}^{ -1/2} A S_{k}^{ -1/2}\,. \label{i45.3}
    \end{equation}
    Analogously to inequalities (\ref{i2.15}) we have:
    \begin{equation}
    	\left \Vert {Q}_{k}\right \Vert  \leq \frac{1}{\sqrt{\tau  a_{1}}}\,,\quad \left \Vert S_{k}^{ -1/2}\right \Vert  \leq \frac{1}{\sqrt{2}}\; ,\text{\quad }\left \Vert G_{k}^{ -1}\right \Vert  \leq 1\,. \label{i45.7}
    \end{equation}
    From (\ref{i45.3}) using inequalities (\ref{i45.7}) follows
    \begin{equation}
    	\left \Vert {P}_{k}\right \Vert = \left\Vert {B}^{1/2}{T}^{-1}_{k} \right\Vert \leq \frac{1}{\sqrt{2 \tau  a_{1}}}\; . \label{i45.9.1}
    \end{equation}
    Using estimations (\ref{i45.7}), (\ref{i45.9.1}), (\ref{i2.15}), and (\ref{i2.28}), analogously to inequalities (\ref{i2.29.5}), (\ref{i2.29.51}) and (\ref{i2.31}), we can obtain:
    \begin{align}
    	& \tau  \left \Vert {P}_{k} A T_{k ,m}^{ -1} \right \Vert = \tau  \left \Vert {P}_{k} {L} {P}_{k,m} \right \Vert  \leq  \frac{b_{0}}{2 a_{1}}\,, \label{i45.9.3}
    \end{align}
    \begin{align}
    	\tau  \sqrt{\tau } \left \Vert {P}_{k} A T_{k ,m}^{ -1} B u_{k -1}\right \Vert &= \tau  {\nu}_{0} \left \Vert {L} {B}^{1/2} T_{k ,m}^{ -1} B u_{k -1}\right \Vert  \leq  b_{0} {\nu}_{0} a_{1}^{-1} \left \Vert B^{1/2} u_{k -1}\right \Vert  , \label{i45.9.5}
    \end{align}
    \begin{align}
    	& \sqrt{\tau } \left \Vert {P}_{k,m} A T_{k ,m}^{ -1} u_{k}\right \Vert  \leq \nu _{0} \left \Vert C_{k ,m}^{ -1} {A}{u}_{k} -{a}_{\tau} \left ({L}{B}^{1/2} T_{k ,m}^{ -1} B {A}^{-1} C_{k ,m}^{ -1}\right ) {A}{u}_{k}\right \Vert  \nonumber  \\
    	&  \leq \nu _{0} \left (1 +(a_{1} +\tau  a_{2}) b_{0} c_{1} a_{1}^{ -1}\right ) \left \Vert A u_{k}\right \Vert  ,\text{\quad }\nu _{0} =1/\sqrt{2 a_{1}} . \label{i45.9.7}
    \end{align}
    To obtain inequality (\ref{i45.9.7}) the following representations $T_{k ,m}^{ -1} = C_{k ,m}^{ -1}-{a}_{\tau} T_{k ,m}^{ -1} B C_{k ,m}^{ -1}$ and ${B}^{1/2}{T}_{k,m}^{-1}{B}^{1/2}={Q}_{k,m}{G}_{k,m}{Q}_{k,m}$ are used.
    
    From inequalities (\ref{i45.9.3}), (\ref{i45.9.5}) and (\ref{i45.9.7}), according to Theorem \ref{theorem2.1} it follows that there exists such positive constant $M_{8}$ (independent of $n$ and $m$) that
    \begin{equation}
    	\sqrt{\tau } \left \Vert {P}_{k} A T_{k ,m}^{ -1} \widetilde{f}_{k}\right \Vert  \leq M_{8}\; . \label{i45.9.11}
    \end{equation}
    According to the representation \eqref{i3.5} and the estimation \eqref{i2.28} we have:
    \begin{align}
    	&  \sqrt{\tau } \left \Vert {P}_{k} T_{k ,m}^{ -1} \right \Vert  \leq \frac{1}{2} \nu _{0}\,, \label{i45.9.15}
    \end{align}
    \begin{align}
    	&  \tau  \left \Vert {P}_{k} T_{k ,m}^{ -1}{B}{u} \right \Vert \leq  \frac{1}{2 a_{1}}\left\Vert {{B}^{1/2}}{u} \right\Vert\,,\quad {u}\in\operatorname{D}\left( B \right)\,.\label{i45.9.17}
    \end{align}
    From the inequalities (\ref{i45.9.15}) and \eqref{i45.9.17} analogously to (\ref{i45.9.11}) we have
    \begin{equation}
    	\sqrt{\tau } \left \Vert {P}_{k} T_{k ,m}^{ -1} \widetilde{f}_{k}\right \Vert  \leq M_{9}\; , \label{i45.9.29}
    \end{equation}
    where $M_{9}$ is a positive constant (independent of $n$ and $m$).
    
    If we apply norms in (\ref{i2.45}) and take into consideration the estimations (\ref{i45.9.11}) and (\ref{i45.9.29}), we get
    \begin{align}
    	& \left \Vert B^{1/2} L_{k ,m} \widetilde{f}_{k}\right \Vert  \leq  \sqrt{\tau }\left (\tau  M_{9} \left \vert {\zeta}_{3,k}^{\left( m \right)}\right \vert  +\tau  M_{8} \left \vert {\zeta}_{1,k}^{\left( m \right)}\right \vert + M_{8} \left \vert {\zeta}_{2,k}^{\left( m \right)}\right \vert \right ) . \label{i7.1}
    \end{align}
    
    Now, let us estimate norm of the operator $B^{1/2} L_{k ,m }$. From (\ref{i2.45}) we have
    \begin{align*}
    	& \left \Vert B^{1/2} L_{k ,m}\right \Vert  \leq  \tau ^{2} \left \vert {\zeta}_{3,k}^{\left( m \right)}\right \vert  \left \Vert {P}_{k} T_{k ,m}^{ -1}\right \Vert + \tau \left \Vert {P}_{k} A T_{k ,m}^{ -1}\right \Vert \left( \tau \left \vert {\zeta}_{1,k}^{\left( m \right)}\right \vert + \left \vert {\zeta}_{2,k}^{\left( m \right)}\right \vert \right) \text{.}
    \end{align*}
    From here according to inequalities (\ref{i45.9.3}) and (\ref{i45.9.15}) we have
    \begin{align}
    	& \left \Vert B^{1/2} L_{k ,m}\right \Vert  \leq \frac{1}{2} \tau ^{3/2} \nu _{0} \left \vert {\zeta}_{3,k}^{\left( m \right)}\right \vert + b_{0} \nu _{0}^{2} \left( \tau \left \vert {\zeta}_{1,k}^{\left( m \right)}\right \vert + \left \vert {\zeta}_{2,k}^{\left( m \right)}\right \vert \right)\,. \label{i7.3}
    \end{align}
    Let us note that, from uniform boundedness of vectors ${w}_{k +1}^{(m) }$ follows uniform boundedness of vectors $A v_{k +1}^{(m)}$ (see inequality (\ref{E1.6})). From here according to the inequality \eqref{eq:sqrt_A} follows uniform boundedness of vectors $A^{1/2} v_{k +1}^{(m) }$.
    
    If we insert the terms ${\gamma}_{k+1}^{\left( m \right)}$ and $\gamma_{k-1}$ into the inequality \eqref{a1_k1} instead of the following ones ${\gamma}_{k+1}$ and $\bar{\gamma}_{k-1}$, respectively, we obtain
    \begin{align}
    	\left \vert {\zeta}_{1,k}^{\left( m \right)}\right \vert  &\leq  K_{1} \left \vert \gamma _{k +1} -\gamma _{k +1}^{(m)}\right \vert \leq  2 K_{1} K_{2} \left \Vert A^{1/2} u_{k +1} -A^{1/2} u_{k +1}^{(m)}\right \Vert  \nonumber\\
    	&= 4 K_{1} K_{2} \left \Vert A^{1/2} v_{k +1} -A^{1/2} v_{k +1}^{(m)}\right \Vert  , \label{i7.9}
    \end{align}
    where $K_{1} =\max \left \vert \psi _{1}^{ \prime } \left (s\right )\right \vert  < +\infty  ,\;0 \leq s \leq K_{2} =\max_{(k ,m)}(\gamma _{k} ,\gamma _{k}^{(m)}) < +\infty \,.$
    
    From \eqref{eq:sqrt_A} using inequality (\ref{E1.6}) it follows that
    \begin{equation*}
    	\left \Vert A^{1/2} u\right \Vert  \leq \frac{b_{0}}{\sqrt{m_{A}}} \left \Vert B^{1/2} u\right \Vert  ,\text{\quad } \forall u \in D (B^{1/2})\text{.}
    \end{equation*}
    According to this inequality, from (\ref{i7.9}) we have
    \begin{equation}
    	\left \vert {\zeta}_{1,k}^{\left( m \right)}\right \vert  \leq K_{3} \left \Vert {w}_{k +1} -{w}_{k +1}^{(m)}\right \Vert  . \label{i7.11}
    \end{equation}
    where $K_{3}$ is a positive constant (independent of $n$ and $m$).
    
    Analogously we get
    \begin{equation}
    	\left \vert {\zeta}_{j,k}^{\left( m \right)}\right \vert  \leq K_{4} \left \Vert {w}_{k +1} -{w}_{k +1}^{(m)}\right \Vert  ,\text{\quad } j=2,3\,,\quad K_{4} =\operatorname{c o n s t} >0. \label{i7.15}
    \end{equation}
    If we insert (\ref{i7.11}) and (\ref{i7.15}) inequalities into  (\ref{i7.1}) and (\ref{i7.3}) we get:
    \begin{align}
    	&  \left \Vert B^{1/2} L_{k ,m} \widetilde{f}_{k}\right \Vert \leq  \sqrt{\tau } K_{5} \left \Vert {w}_{k +1} -{w}_{k +1}^{(m)}\right \Vert  ,\text{\quad }K_{5} =\operatorname{c o n s t} >0 , \label{i7.21}
    \end{align}
    
    \begin{equation}
    	\left \Vert B^{1/2} L_{k ,m}\right \Vert  \leq K_{6} \left \Vert {w}_{k +1} -{w}_{k +1}^{(m)}\right \Vert  ,\text{\quad }K_{6} =\operatorname{c o n s t} >0. \label{i7.23}
    \end{equation}
    
    If we move on to norms in the equality (\ref{i2.43}) and consider the inequalities (\ref{i7.21}), (\ref{i7.23}), \eqref{i2.28}, $\left\Vert {L} \right\Vert \leq {b}_{0}$ (see \eqref{E1.6}) and Theorem \ref{theorem2.1} we get
    \begin{equation}
    	\left \Vert Z_{k +1}^{(m +1)}\right \Vert  \leq \sqrt{\tau } K_{0} \left \Vert Z_{k +1}^{(m)}\right \Vert  , \label{i7.25}
    \end{equation}
    where $K_{0}$ is a positive constant (independent of $n$ and $m$).
    
    Let $\tau $ satisfies condition $\sqrt{\tau } K_{0} \leq q <1$. Then from (\ref{i7.25}) follows that $\left \Vert Z_{k +1}^{(m)}\right \Vert  \leq q^{m} \left \Vert Z_{k +1}^{(0)}\right \Vert  $,  which means that (\ref{i1.3}) iteration is convergent.
\end{enumerate}

So, we proved the following theorem
\begin{theorem}
	\label{theorem7.4}
	If operators $A$ and $B$ satisfy conditions from section 1 and $D (A) =D (B^{1/2})$, then (\ref{i1.3}) iteration converges with geometric progression speed.
\end{theorem}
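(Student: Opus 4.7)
The plan is to establish geometric convergence of the iterates $v_{k+1}^{(m)}$ to the fixed-point solution $v_{k+1}$ of \eqref{i1.1} by controlling the $B^{1/2}$-norm of the error $Z_{k+1}^{(m)}=B^{1/2}(v_{k+1}-v_{k+1}^{(m+1)})$. A direct comparison between \eqref{i1.1} and \eqref{i1.3} is not immediate because the iterated coefficients $a_{1,k}^{(m)}$, $a_{3,k}^{(m)}$ and $\psi_2(\gamma_{k+1}^{(m)})$ enter nonlinearly. So the argument must have two distinct stages: first a boundedness stage that controls the coefficients along the iteration, then a contraction stage.

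First I would prove that the vectors $w_{k+1}^{(m)}=B^{1/2}v_{k+1}^{(m)}$ are uniformly bounded in $m$ and $n$. The key algebraic trick is the factorization $T_{k,m}=S_{k,m}^{1/2}G_{k,m}S_{k,m}^{1/2}$ which isolates the contribution of $B$ and yields the basic operator bound $\|B^{1/2}T_{k,m}^{-1}\|\le 1/\sqrt{2\tau a_{1}}$. Applying $B^{1/2}T_{k,m}^{-1}$ to \eqref{i1.3}, the terms $\tfrac{1}{2}\tau\psi_{2}(\gamma_{k-1})Av_{k+1}^{(m)}$ and $\tau N v_{k+1}^{(m)}$ pick up a factor $\sqrt{\tau}$ (using $\|AB^{-1/2}\|\le b_0$), which will give the contractive self-map. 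The only nontrivial part of $\widetilde f_k$ is the ``large'' term $2u_k$, which lacks a $\tau$ multiplier; one absorbs it by writing $T_{k,m}^{-1}=C_{k,m}^{-1}-a_\tau T_{k,m}^{-1}BC_{k,m}^{-1}$ and using the hypothesis $D(A)=D(B^{1/2})$ (hence boundedness of $B^{1/2}A^{-1}$) together with the uniform boundedness of $Au_k$ from Theorem \ref{theorem2.1}.

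Next I would estimate the iteration error. Subtracting \eqref{i1.3} from \eqref{i1.1} after applying $B^{1/2}$ produces terms of the form $\tau\,\psi_2(\gamma_{k-1})\,B^{1/2}T_{k,m}^{-1}AZ_{k+1}^{(m)}$ and $\tau\,B^{1/2}T_{k,m}^{-1}NZ_{k+1}^{(m)}$, each carrying the good factor $\sqrt{\tau}$, plus a perturbation term $B^{1/2}L_{k,m}\widetilde f_k$ with $L_{k,m}=T_k^{-1}-T_{k,m}^{-1}$. Using the resolvent identity $L_{k,m}=-T_k^{-1}(T_k-T_{k,m})T_{k,m}^{-1}$ and the explicit form $T_k-T_{k,m}=\tau^{2}(a_{3,k}-a_{3,k}^{(m)})I+\tau^{2}(a_{1,k}-a_{1,k}^{(m)})A+\tfrac{\tau}{2}(\psi_2(\gamma_{k+1})-\psi_2(\gamma_{k+1}^{(m)}))A$, together with the Lipschitz estimates for $\psi_1,\psi_2,\psi_3$ (as in \eqref{a1_k1}) rephrased in the form $|\zeta_{j,k}^{(m)}|\le c\,\|w_{k+1}-w_{k+1}^{(m)}\|$ via $D(A)=D(B^{1/2})$, this perturbation is bounded by $\sqrt{\tau}\,c\,\|Z_{k+1}^{(m)}\|$. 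Collecting all contributions yields $\|Z_{k+1}^{(m+1)}\|\le \sqrt{\tau}\,K_{0}\|Z_{k+1}^{(m)}\|$, and the contraction follows as soon as $\sqrt{\tau}K_{0}<1$.

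The main obstacle is unquestionably the term $B^{1/2}L_{k,m}\widetilde f_k$: it is the only term in the error recursion that carries neither an explicit $\tau$ nor a factor of $Z_{k+1}^{(m)}$ from the linear part, and must instead inherit both through the coefficient differences $\zeta_{j,k}^{(m)}$. The delicate point is that $\widetilde f_k$ contains $2u_k$ and $\tau a_1 Bu_{k-1}$, which individually do not produce a $\sqrt{\tau}$ factor; one must again exploit the splitting $T_{k,m}^{-1}=C_{k,m}^{-1}-a_\tau T_{k,m}^{-1}BC_{k,m}^{-1}$ and the boundedness of $B^{1/2}A^{-1}$ to trade an unbounded $B$ for the bounded $A$ applied to the uniformly bounded vector $Au_k$ supplied by Theorem \ref{theorem2.1}. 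Once this exchange is carried out, combining it with the Lipschitz bound $|\zeta_{j,k}^{(m)}|\le c\|w_{k+1}-w_{k+1}^{(m)}\|$ produces exactly the $\sqrt{\tau}$-small contribution that closes the contraction.
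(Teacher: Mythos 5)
Your proposal is correct and follows essentially the same route as the paper: the same two-stage structure (uniform boundedness of $w_{k+1}^{(m)}=B^{1/2}v_{k+1}^{(m)}$ via the factorization $T_{k,m}=S_{k,m}^{1/2}G_{k,m}S_{k,m}^{1/2}$ and the bound $\lVert B^{1/2}T_{k,m}^{-1}\rVert\le 1/\sqrt{2\tau a_1}$, then the contraction estimate $\lVert Z_{k+1}^{(m+1)}\rVert\le\sqrt{\tau}\,K_0\lVert Z_{k+1}^{(m)}\rVert$), including the identical treatment of the critical term $B^{1/2}L_{k,m}\widetilde f_k$ through the resolvent identity, the splitting $T_{k,m}^{-1}=C_{k,m}^{-1}-a_\tau T_{k,m}^{-1}BC_{k,m}^{-1}$, the boundedness of $B^{1/2}A^{-1}$ from $D(A)=D(B^{1/2})$, and the Lipschitz bounds $\lvert\zeta_{j,k}^{(m)}\rvert\le c\,\lVert w_{k+1}-w_{k+1}^{(m)}\rVert$.
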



\def\printchapternonum{}
\bibliographystyle{plain}
\bibliography{bibdata}

\begin{thebibliography}{10}

\bibitem{Ar}
A.~Arosio and S.~Panizzi.
\newblock {On the well-posedness of the Kirchhoff string}.
\newblock {\em {Trans. Am. Math. Soc.}}, 348(1):305--330, 1996.

\bibitem{Bak1}
G.~A. Baker.
\newblock {Error estimates for finite element methods for second order
  hyperbolic equations}.
\newblock {\em {SIAM J. Numer. Anal.}}, 13(4):564--576, 1976.

\bibitem{Bak2}
G.~A. Baker and J.~H. Bramble.
\newblock {Semidiscrete and single step fully discrete approximations for
  second order hyperbolic equations}.
\newblock {\em {RAIRO, Anal. Num\'er.}}, 13:75--100, 1979.

\bibitem{Bak3}
G.~A. Baker, V.~A. Dougalis, and S.~M. Serbin.
\newblock {An approximation theorem for second-order evolution equations}.
\newblock {\em {Numer. Math.}}, 35(2):127--142, 1980.

\bibitem{Bal}
L.~A. Bales.
\newblock {Semidiscrete and single step fully discrete finite element
  approximations for second order hyperbolic equations with nonsmooth
  solutions}.
\newblock {\em {RAIRO, Mod\'elisation Math. Anal. Num\'er.}}, 27(1):55--63,
  1993.

\bibitem{BL}
J.~M. Ball.
\newblock {Stability theory for an extensible beam}.
\newblock {\em {J. Differ. Equations}}, 14:399--418, 1973.

\bibitem{Ber}
S.~Bernstein.
\newblock {Sur une classe d'\'equations fonctionnelles aux d\'eriv\'ees
  partielles}.
\newblock {\em {Izv. Akad. Nauk SSSR, Ser. Mat.}}, 4:17--26, 1940.

\bibitem{BM}
L.~C. Berselli and R.~Manfrin.
\newblock {Linear perturbations of the Kirchhoff equation}.
\newblock {\em {Comput. Appl. Math.}}, 19(2):157--178, 2000.

\bibitem{Bi}
P.~Biler.
\newblock {Remark on the decay for damped string and beam equations}.
\newblock {\em {Nonlinear Anal., Theory Methods Appl.}}, 10:839--842, 1986.

\bibitem{CC}
S.~M. Choo and S.~K. Chung.
\newblock {Finite difference approximate solutions for the strongly damped
  extensible beam equations}.
\newblock {\em {Appl. Math. Comput.}}, 112(1):11--32, 2000.

\bibitem{SS}
I.~Christie and J.~M. Sanz-Serna.
\newblock {A Galerkin method for a nonlinear integro-differential wave system}.
\newblock {\em {Comput. Methods Appl. Mech. Eng.}}, 44:229--237, 1984.

\bibitem{DanSp2}
P.~D'Ancona and S.~Spagnolo.
\newblock {On an abstract weakly hyperbolic equation modelling the nonlinear
  vibrating string}.
\newblock In {\em Developments in partial differential equations and
  applications to mathematical physics. Proceedings of an international
  meeting, Ferrara, Italy, October 14-18, 1991}, pages 27--32. New York, NY:
  Plenum Press, 1992.

\bibitem{DanSp}
P.~D'Ancona and S.~Spagnolo.
\newblock {A class of nonlinear hyperbolic problems with global solutions}.
\newblock {\em {Arch. Ration. Mech. Anal.}}, 124(3):201--219, 1993.

\bibitem{Br}
E.~H. de~Brito.
\newblock {Decay estimates for the generalized damped extensible string and
  beam equation}.
\newblock {\em {Nonlinear Anal., Theory Methods Appl.}}, 8:1489--1496, 1984.

\bibitem{GCh}
T.~Geveci and I.~Christie.
\newblock {The convergence of a Galerkin approximation scheme for an extensible
  beam}.
\newblock {\em {RAIRO, Mod\'elisation Math. Anal. Num\'er.}}, 23(4):597--613,
  1989.

\bibitem{JANGVELADZE201669}
T.~Jangveladze, Z.~Kiguradze, and B.~Neta.
\newblock {\em {Numerical solutions of three classes of nonlinear parabolic
  integro-differential equations}}.
\newblock Amsterdam: Elsevier/Academic Press, 2015.

\bibitem{TK}
T.~Kato.
\newblock {\em {Perturbation theory for linear operators}}, volume 132.
\newblock Springer, Cham, 2nd edition, 1984.

\bibitem{Kac}
J.~Ka\v{c}ur.
\newblock {Application of Rothe's method to perturbed linear hyperbolic
  equations and variational inequalities}.
\newblock {\em {Czech. Math. J.}}, 34:92--106, 1984.

\bibitem{Kr}
S.~G. Krein.
\newblock {Linear equations in Banach space (Linejnye uravneniya v banakhovom
  prostranstve)}.
\newblock {Moskau: Verlag ``Nauka'', Hauptredaktion f\"ur
  physikalisch-mathematische Literatur. 104 S. R. 0.34}, 1971.

\bibitem{Lad}
O.~A. Ladyzhenskaya.
\newblock {On the solution of nonstationary operator equations}.
\newblock {\em {Transl., Ser. 2, Am. Math. Soc.}}, 65:200--236, 1967.

\bibitem{LR}
I.-S. Liu and M.~A. Rincon.
\newblock {Effect of moving boundaries on the vibrating elastic string}.
\newblock {\em {Appl. Numer. Math.}}, 47(2):159--172, 2003.

\bibitem{Man}
R.~Manfrin.
\newblock {Global solvability to the Kirchhoff equation for a new class of
  initial data}.
\newblock {\em {Port. Math. (N.S.)}}, 59(1):91--109, 2002.

\bibitem{Mat}
M.~P. Matos.
\newblock {Mathematical analysis of the nonlinear model for the vibrations of a
  string}.
\newblock {\em {Nonlinear Anal., Theory Methods Appl.}}, 17(12):1125--1137,
  1991.

\bibitem{Med}
L.~A. Medeiros.
\newblock {On a new class of nonlinear wave equations}.
\newblock {\em {J. Math. Anal. Appl.}}, 69:252--262, 1979.

\bibitem{Nish}
K.~Nishihara.
\newblock {On a global solution of some quasilinear hyperbolic equation}.
\newblock {\em {Tokyo J. Math.}}, 7:437--459, 1984.

\bibitem{Pan}
S.~Panizzi.
\newblock {Low regularity global solutions for nonlinear evolution equations of
  Kirchhoff type}.
\newblock {\em {J. Math. Anal. Appl.}}, 332(2):1195--1215, 2007.

\bibitem{Per}
J.~Peradze.
\newblock {An approximate algorithm for a Kirchhoff wave equation}.
\newblock {\em {SIAM J. Numer. Anal.}}, 47(3):2243--2268, 2009.

\bibitem{Pul}
M.~Pultar.
\newblock {Solutions of abstract hyperbolic equations by the Rothe method}.
\newblock {\em {Apl. Mat.}}, 29:23--39, 1984.

\bibitem{Reed}
M.~Reed and B.~Simon.
\newblock {Methods of modern mathematical physics. I: Functional analysis. Rev.
  and enl. ed}.
\newblock {New York etc.: Academic Press, A Subsidiary of Harcourt Brace
  Jovanovich, Publishers, XV, 400 p.}, 1980.

\bibitem{RJ}
Dzh.~L. Rogava.
\newblock {Poludiskretnye skhemy dlya operatornykh differential'nykh
  uravneni\u{i}}.
\newblock {\em Izdatel'stvo ``Tekhnicheskogo Universitet'', Tbilisi, 288 p.},
  1995.

\bibitem{R2}
J.~Rogava.
\newblock {The study of the stability of semidiscrete schemes by means of
  Chebyshev orthogonal polynomials}.
\newblock {\em GSSR Mecn. Akad. Moambe}, 83(3):545--548, 1976.

\bibitem{RTs2}
J.~Rogava and M.~Tsiklauri.
\newblock {Integral semi-discrete scheme for a Kirchhoff type abstract equation
  with the general nonlinearity}.
\newblock {\em {Appl. Math. Inform. Mech.}}, 14(2):18--34, 2009.

\bibitem{RTs}
J.~Rogava and M.~Tsiklauri.
\newblock {Convergence of a semi-discrete scheme for an abstract nonlinear
  second order evolution equation}.
\newblock {\em {Appl. Numer. Math.}}, 75:22--36, 2014.

\bibitem{Sob}
P.~E. Sobolevskij and L.~M. Chebotarova.
\newblock {Approximative L\"osung durch das Geradenverfahren des Cauchyproblems
  f\"ur eine abstrakte hyperbolische Gleichung}.
\newblock {\em {Izv. Vyssh. Uchebn. Zaved., Mat.}}, 1977(5(180)):103--116,
  1977.

\bibitem{Sege}
G.~Szeg\"o.
\newblock {\em {Orthogonal polynomials. 4th ed}}, volume~23.
\newblock Providence, RI: American Mathematical Society (AMS), 1975.

\bibitem{vashakidze2020application}
Zurab Vashakidze.
\newblock {An application of the Legendre polynomials for the numerical
  solution of the nonlinear dynamical Kirchhoff string equation}.
\newblock {\em {Mem. Differ. Equ. Math. Phys.}}, 79:107--119, 2020.

\bibitem{S0}
S.~Woinowsky-Krieger.
\newblock {The effect of an axial force on the vibration of hinged bars}.
\newblock {\em {J. Appl. Mech.}}, 17:35--36, 1950.

\end{thebibliography}

\section*{Authors' addresses:}

\begin{description}
  \item[{\Rogava}\orcidA{}] \hfill \\ Faculty of Exact and Natural Sciences, Ivane Javakhishvili Tbilisi State University (TSU), Ilia Vekua Institute of Applied Mathematics (VIAM), 2 University St., Tbilisi 0186, Georgia. \\ E-mail: \href{mailto:jemal.rogava@tsu.ge}{\textbf{jemal.rogava@tsu.ge}}
  \item[{\Tsiklauri}\orcidB{}] \hfill \\ Missouri University of Science and Technology, Electromagnetic Compatibility Laboratory, 4000 Enterprise Drive, Rolla, MO 65409, USA. \\ E-mails: \href{mailto:tsiklaurim@mst.edu}{\textbf{tsiklaurim@mst.edu}}, \href{mailto:mtsiklauri@gmail.com}{\textbf{mtsiklauri@gmail.com}}
  \item[{\Vashakidze}\orcidC{}] \hfill \\ Institute of Mathematics, School of Science and Technology, The University of Georgia (UG), 77a, M. Kostava st., Tbilisi 0171, Georgia; Ilia Vekua Institute of Applied Mathematics (VIAM) of Ivane Javakhishvili Tbilisi State University (TSU), 2, University St., Tbilisi 0186, Georgia. \\ E-mails: \href{mailto:zurab.vashakidze@gmail.com}{\textbf{zurab.vashakidze@gmail.com}}, \href{mailto:z.vashakidze@ug.edu.ge}{\textbf{z.vashakidze@ug.edu.ge}}
\end{description}

\end{document}